\newtheorem{theorem}{Theorem}[section]
\newtheorem{corollary}{Corollary}[section]
\newtheorem{lemma}[theorem]{Lemma}
\newtheorem{example}{Example}
\theoremstyle{definition}
\newtheorem{definition}[theorem]{Definition}
\newtheorem{remark}{Remark}
\newcommand{\R}{\mathbb R}
\newcommand{\h}{\mathcal{H}^{n-1}}
\def\dfrac{\displaystyle\frac}
\def\dint{\displaystyle\int}
\DeclareMathOperator{\sgn}{sgn}
\newcommand{\myfootnote}[2]{\begingroup
	\def\@makefnmark{}%
	\addtocounter{footnote}{-1}%
	\footnote{\textbf{#1} #2}
	\endgroup}
\title{A Talenti comparison result for a class of Neumann \\  boundary value problems}
\author{A. Celentano, C. Nitsch, C. Trombetti}
\date{}
\newcommand{\Addresses}{{
  \bigskip 
  \footnotesize 
 
  \medskip 
 
  \noindent\textit{E-mail address}, A.~Celentano: \texttt{antonio.celentano2@unina.it}\\ 
    \noindent\textit{E-mail address}, C.~Nitsch: \texttt{c.nitsch@unina.it}\\
    \noindent\textit{E-mail address}, C.~Trombetti: \texttt{cristina@unina.it}
   \medskip 
     
    \noindent\textsc{Dipartimento di Matematica e Applicazioni ``R. Caccioppoli'', Universit\`a degli studi di Napoli Federico II, Via Cintia, Complesso Universitario Monte S. Angelo, 80126 Napoli, Italy.}

    \par\nopagebreak 

}}
\begin{document}
\maketitle

\begin{abstract} 
\noindent In this paper, we establish a comparison principle in terms of Lorentz norms and point-wise inequalities between a positive solution $u$ to the Poisson equation with non-homogeneous Neumann boundary conditions and a specific positive solution $v$ to the Schwartz symmetrized problem, which is related to $u$ through an additional boundary condition.\\

\noindent\textsc{MSC 2020: 35B09; 35B51; 35D30; 35J05}. \\
 
\noindent\textsc{Keywords: Talenti comparison, Poisson equation, Neumann conditions.}  

\end{abstract}
\Addresses 

\section{Introduction}
In the paper \cite{Tal}, Talenti proved a key result in the context of symmetrization techniques, which are tools used to deduce information about a problem, through the study of a simplified one. Given an open and bounded set $\Omega\subset\mathbb{R}^n$ and a function $f$ such that $f\in L^{\frac{2n}{n+2}}(\Omega)$ if $n>2$, $f\in L^p(\Omega)$ for some $p>1$ if $n=1$, a comparison principle is established between the solutions of the two following problems 
\begin{equation}\label{tprob}
\left\{
\begin{array}{ll}
-\Delta u= f  & \mbox{in $\Omega$},\\\\
u=0 & \mbox{on $\partial\Omega$},
\end{array}
\right.\qquad
\left\{
\begin{array}{ll}
-\Delta v= f^\sharp & \mbox{in $\Omega^\sharp$},\\\\
v=0 & \mbox{on $\partial\Omega^\sharp$},
\end{array}
\right.
\end{equation}
where $\Omega^\sharp$ denotes the ball, centered at the origin, with the same Lebesgue measure $\mathcal{L}^n$ as $\Omega$ and $f^\sharp$ is the decreasing Schwarz rearrangement of $f$ (see Section 2). Specifically, Talenti showed that $u^\sharp\leq v$ holds $\mathcal{L}^n$ almost everywhere in $\Omega^\sharp$.\\

\noindent Afterwards, many generalizations in different directions appeared in the literature. Under Dirichlet boundary conditions, nonlinear elliptic operators are considered in \cite{Tal2}, parabolic operators in \cite{ALT}, anisotropic elliptic operators in \cite{Alfetr} and higher order operators in \cite{T2, AB2}. Different boundary conditions were taken into account in \cite{ANT}, where the authors established a comparison result between the Lorentz norms of the solutions of the elliptic problems, obtained by replacing the Dirichlet conditions in \eqref{tprob} with a Robin one. In the planar case, if $f\equiv 1$ in $\Omega$, they are able to recover a point-wise inequality as the one proved by Talenti. The proof is based on the isoperimetric inequality, rearrangement properties and a careful analysis of superlevel sets of the solutions, which can touch the boundary in the case of Robin conditions. Further extensions are possible. In \cite{San}, the anisotropic case is treated; in \cite{AGM}, the authors deal with the $p$-Laplace operator; in \cite{CGNT}, with the Hermite operator and in \cite{ACNT}, with mixed Robin and Dirichlet boundary conditions.\\

\noindent In the end, we mention some results concerning the Neumann boundary conditions. In \cite{MSP}, the authors proved a point-wise comparison between the Schwarz rearrangements of positive and negative parts of a solution of an elliptic problem with homogeneous Neumann condition and the solutions of two symmetrized problems, both defined on a ball having half measure of $\Omega$, with Dirichlet boundary conditions, obtained by taking the Schwarz rearrangements of the positive and negative parts of $f$. Comparisons with solutions to problems that are not obtained by symmetrizing the original PDE are considered in \cite{Fe, FM}. In \cite{L2}, the author established a comparison between the $L^p$ norm of a solution of Poisson equations with homogeneous Neumann boundary conditions and a solution of the problem with cap-symmetrized data when $\Omega$ is a ball or a spherical shell. In \cite{Lang_rob}, similar results are proved also for pure Robin condition and mixed Robin and Neumann boundary conditions. Other kinds of rearrangements have been investigated, for instance, in \cite{L1, Brock2}. Eventually, we mention the work \cite{Ci}, where comparison principles for Neumann problems are established on Riemannian manifolds.\\

\noindent Using the methods presented in \cite{ANT}, we provide in this paper a comparison result in terms of Lorentz norms (see Definition \ref{ln}) and point-wise inequalities between a positive solution $u$ to the Poisson equation with non-homogeneous Neumann boundary conditions and a particular positive solution $v$ to the Schwartz symmetrized problem, which is linked to $u$ through an appropriate boundary condition.\\

\noindent For $n\geq 2$, let $\Omega\subset \mathbb{R}^n$ be an open, bounded and connected set with Lipschitz boundary. Given $f\in L^r(\Omega)$ with $r=\max\{2, 2^{-1}n\}$, we consider the following  problem
\begin{equation}\label{neumprob}
\left\{
\begin{array}{ll}
-\Delta u= f  & \mbox{in $\Omega$},\\\\
\dfrac{\partial u}{\partial \nu}=c & \mbox{on $\partial\Omega$,}
\end{array}
\right.
\end{equation}
where $\nu$ denotes the outer unit normal to $\partial\Omega$ and $c$ is a real constant.
A function $u \in W^{1,2}(\Omega)$ is a weak solution to \eqref{neumprob} if
\begin{equation}
\label{weaksol}
\int_{\Omega} \nabla u \cdot \nabla \phi \,dx -c\int_{\partial \Omega} \phi \,d\mathcal{H}^{n-1}= \int_{\Omega} f \phi \,dx, \quad \forall \phi \in H^1(\Omega).
\end{equation}
It is known \cite[Proposition 7.7]{Tay} that \eqref{neumprob} admits weak solutions, all defined modulo constants, if and only if the following compatibility condition is satisfied 
\begin{equation}\label{compcond}
c=-\frac{1}{\mathcal{H}^{n-1}(\partial\Omega)}\int_{\Omega} f\,dx.
\end{equation}
A Lipschitz domain satisfies the exterior cone property \cite[Theorem 1.2.2.2]{Gri}, then standard results \cite[Theorems 8.22, 8.27]{gilbtrud} imply that a solution $u$ to \eqref{neumprob} is bounded on $\Omega$ and locally Holder continuous. Hence, it exists $k_0\in \mathbb{R}$ such that for $k>k_0$, $u_k=u+k$ is a positive weak solution to \eqref{neumprob}. We take into account the Schwartz symmetrized problem
\begin{equation}\label{symm}
\left\{
\begin{array}{ll}
-\Delta v= f^\sharp & \mbox{in $\Omega^\sharp$},\\\\
\dfrac{\partial v}{\partial \nu}=c^\star & \mbox{on $\partial\Omega^\sharp$},
\end{array}
\right.
\end{equation}
By applying rearrangement properties (see Section 2), we deduce from the compatibility condition in problems \eqref{neumprob} and \eqref{symm} that
\begin{equation}\label{cstar}
c^\star=-\frac{1}{\mathcal{H}^{n-1}(\partial\Omega^\sharp)}\int_{\Omega^\sharp} f^\sharp\,dx\leq -\frac{1}{\mathcal{H}^{n-1}(\partial\Omega^\sharp)}\int_{\Omega} f\,dx=\frac{\mathcal{H}^{n-1}(\partial\Omega)}{\mathcal{H}^{n-1}(\partial\Omega^\sharp)}c.
\end{equation}
From now on, we assume
\begin{align}\label{fposmed}
    \int_{\Omega} f\,dx>0,
\end{align}
which implies $c, c^*<0$. Moreover, the isoperimetric inequality ensures that $c^*\leq c$. A solution to problem \eqref{symm} is radial and non-increasing along the radius. A comparison principle can be established between a positive weak solution $u$ to \eqref{neumprob} and the unique positive weak solution $v\in W^{1,2}(\Omega^\sharp)$ to problem \eqref{symm} which satisfies one of the following equalities
\begin{equation}\label{cond_1}
  \frac{c^\star}{c}\int_{\partial \Omega}u\,d\mathcal{H}^{n-1}=\int_{\partial \Omega^\sharp}v\,d\mathcal{H}^{n-1},
\end{equation}
\begin{equation}\label{cond_2}
   \frac{c^\star}{c}\int_{\partial \Omega}u^2\,d\mathcal{H}^{n-1}=\int_{\partial \Omega^\sharp}v^2\,d\mathcal{H}^{n-1}.
\end{equation}
For sign changing solutions, some counterexamples are possible (see Example \ref{ex5} in Section 4). \\ 
We state our main results.
\begin{theorem}\label{th_main_f}
Let $u$ be a positive solution to problem \eqref{neumprob}. If $v$ is the positive solution to problem \eqref{symm} satisfying the equality \eqref{cond_1}, then
\begin{equation}\label{compL1}\|u\|_{L^{p,1}(\Omega)}\le \|v\|_{L^{p,1}(\Omega^\sharp)}\quad \mbox{for all } 0< p\le\frac{n}{2n-2};\end{equation}
If $v$ is the positive solution to problem \eqref{symm} satisfying the equality \eqref{cond_2}, then
\begin{equation}\label{compL1-2}
\|u\|_{L^{p,1}(\Omega)}\le \|v\|_{L^{p,1}(\Omega^\sharp)}\quad \mbox{for all } 0< p\le\frac{n}{2n-2},
\end{equation}
\begin{equation}\label{compL2}\|u\|_{L^{2p,2}(\Omega)}\le \|v\|_{L^{2p,2}(\Omega^\sharp)} \quad \mbox{for all } 0< p\le\frac{n}{3n-4}.\end{equation}
\end{theorem}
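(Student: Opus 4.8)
The plan is to follow the Talenti--type strategy adapted to the Neumann/Robin setting of \cite{ANT}. First I would fix a level $t > t_{\min} := \min_{\Omega^\sharp} v$ (equivalently $t$ below $\max u$ shifted appropriately) and work on the superlevel sets $U_t = \{u > t\}$ inside $\Omega$. The crucial point is that, unlike the Dirichlet case, $U_t$ may touch $\partial\Omega$, so I would split its (reduced) boundary into the interior part $\partial^* U_t \cap \Omega$ and the boundary part $\partial^* U_t \cap \partial\Omega$. Integrating the equation $-\Delta u = f$ over $U_t$ and using the Neumann condition $\partial u/\partial\nu = c$ on the portion lying on $\partial\Omega$ gives
\begin{equation}\label{proofstep1}
-\frac{d}{dt}\int_{\{u>t\}}|\nabla u|\,dx = \int_{\{u>t\}} f\,dx + c\,\mathcal{H}^{n-1}(\partial^*U_t\cap\partial\Omega),
\end{equation}
for a.e.\ $t$, by the coarea formula. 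Then Cauchy--Schwarz applied to $\mu(t)=|\{u>t\}|$, combined with the coarea formula $-\mu'(t)=\int_{\{u=t\}}|\nabla u|^{-1}\,d\mathcal{H}^{n-1}$, yields $\left(\mathcal{H}^{n-1}(\partial^*U_t\cap\Omega)\right)^2 \le \left(-\frac{d}{dt}\int_{\{u>t\}}|\nabla u|\,dx\right)(-\mu'(t))$ up to the boundary-term correction.

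Next I would invoke the relative isoperimetric inequality (or the plain isoperimetric inequality together with the compatibility condition \eqref{compcond}) to bound $\mathcal{H}^{n-1}(\partial^*U_t\cap\Omega)$ from below by $n\omega_n^{1/n}\mu(t)^{(n-1)/n}$ minus a contribution of $\mathcal{H}^{n-1}(\partial^*U_t\cap\partial\Omega)$; the sign of $c<0$ (guaranteed by \eqref{fposmed}) is what makes the boundary term help rather than hurt, exactly as in \cite{ANT}. Doing the same computation for the radial solution $v$ of \eqref{symm}, where all inequalities are equalities, produces a differential inequality comparing the distribution function $\mu(t)$ of $u$ with the distribution function of $v$. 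The role of the normalization \eqref{cond_1} (resp.\ \eqref{cond_2}) is to control the ``initial'' boundary integral $\int_{\partial\Omega} u\,d\mathcal{H}^{n-1}$ (resp.\ $\int_{\partial\Omega}u^2\,d\mathcal{H}^{n-1}$) by the corresponding quantity for $v$: integrating \eqref{proofstep1} in $t$ and using $\int_0^\infty \mathcal{H}^{n-1}(\partial^*U_t\cap\partial\Omega)\,dt = \int_{\partial\Omega} u\,d\mathcal{H}^{n-1}$ ties the free constant in the ODE comparison to \eqref{cond_1}. This is where the factor $c^\star/c$ enters, reconciling the different boundary measures $\mathcal{H}^{n-1}(\partial\Omega)$ and $\mathcal{H}^{n-1}(\partial\Omega^\sharp)$.

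From the resulting pointwise differential inequality on $[0,\infty)$ I would derive, via a Gronwall-type / integrating-factor argument, a bound on $u^\sharp$ (the decreasing rearrangement of $u$) in terms of $v$. Because in the Neumann case one does not get the clean pointwise bound $u^\sharp \le v$ in all dimensions, the inequality is integrated against suitable weights $s^{1/p-1}$ to produce the Lorentz-norm estimates \eqref{compL1}, \eqref{compL1-2}, \eqref{compL2}; the exponent restrictions $p \le n/(2n-2)$ and $p \le n/(3n-4)$ come precisely from requiring the relevant weight integrals (involving powers of $\mu(t)$ and $\mu(t)^{(n-1)/n}$) to converge and the error terms to have a sign. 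For \eqref{compL2} the squared normalization \eqref{cond_2} is used instead, and one tests with $u$ rather than $1$ in a second application of \eqref{weaksol}, producing the energy-type identity that yields the $L^{2p,2}$ bound.

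The main obstacle I anticipate is the bookkeeping of the boundary contribution $\mathcal{H}^{n-1}(\partial^*U_t\cap\partial\Omega)$: one must show that after integrating in $t$ the ``bad'' boundary terms for $u$ are dominated by the corresponding ``good'' terms for $v$, and this domination is exactly what the normalizations \eqref{cond_1}--\eqref{cond_2} encode, but making the comparison rigorous requires care because the superlevel sets of $u$ need not be nested with balls and the relative isoperimetric inequality is not saturated. A secondary technical point is justifying the a.e.\ differentiability in $t$ of $\int_{\{u>t\}}|\nabla u|\,dx$ and the coarea manipulations on a general Lipschitz domain, for which I would rely on the boundedness and local Hölder continuity of $u$ already recorded in the excerpt together with Fleming--Rishel.
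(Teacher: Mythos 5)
Your outline follows the same route as the paper: the level-set identity $\int_{\partial_i U_t}|\nabla u|\,d\mathcal{H}^{n-1}-c\,\mathcal{H}^{n-1}(\partial_e U_t)=\int_{U_t}f\,dx$, Cauchy--Schwarz plus the isoperimetric inequality to get the differential inequality \eqref{ineq_fundamental} (with equality for the radial $v$), and the Fubini identity $\int_0^\infty \mathcal{H}^{n-1}(\partial_e U_\tau)\,d\tau=\int_{\partial\Omega}u\,d\mathcal{H}^{n-1}$ to link the boundary terms to \eqref{cond_1}. That part, and the proof of \eqref{compL1} (where after multiplying by $\mu(t)^{1/p-(2n-2)/n}$ and integrating, the boundary contributions of $u$ and $v$ match exactly thanks to \eqref{cond_1}), is correctly sketched.

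There are, however, genuine gaps in the second half. First, for \eqref{compL1-2} and \eqref{compL2} the weight $t$ (needed so that Fubini produces $\int_{\partial\Omega}u^2$, matching \eqref{cond_2}) prevents the boundary terms from simply cancelling after integration: one is left, after integrating by parts, with an inequality of the form $\tau\xi'(\tau)\le\xi(\tau)+C$, and the comparison is closed only through the Gronwall Lemma \ref{lem_Gronwall} started at $\tau_0=v_m$, together with the facts that $u_m\le v_m$ and that $\mu(t)\le\phi(t)=|\Omega|$ for $0\le t\le v_m$ (inequality \eqref{ineq_iniziale}); none of this appears in your plan, and without it the ``domination of the bad boundary terms by the good ones'' you invoke cannot be made rigorous. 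Second, your explanation of the exponent thresholds is not the actual mechanism: $p\le\frac{n}{2n-2}$ is needed so that $\delta=\frac1p-\frac{2n-2}{n}\ge0$ (monotonicity of $w\mapsto w^\delta\int_0^wf^*$, i.e.\ of $F$), and $p\le\frac{n}{3n-4}$ is needed precisely so that $F(\ell)\,\ell^{-\frac{2n-2}{n}}$ is non-decreasing, which legitimizes multiplying \eqref{ineq_fundamental} by $tF(\mu(t))\mu(t)^{-\frac{2n-2}{n}}$; there is no convergence issue. Third, ``testing with $u$'' in \eqref{weaksol} is not how the $L^{2p,2}$ bound is obtained (it would produce $\int|\nabla u|^2$, not a distribution-function inequality); the paper instead reduces \eqref{compL2} to $\int_0^\infty F(\mu)\,dt\le\int_0^\infty F(\phi)\,dt$ and proves that by a second application of the same multiply-integrate-by-parts-Gronwall scheme with the auxiliary function $H$.
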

\begin{theorem}\label{th_main_1}
Let assume that $f\equiv 1$ in $\Omega$. Let $u$ be a positive solution to problem \eqref{neumprob}. If $v$ is the positive solution to problem \eqref{symm} satisfying the equality \eqref{cond_1}, it holds
$$u^\sharp(x)\le v(x) \quad \forall\, x \in \>\mbox{in $\Omega^\sharp$},\quad \>\mbox{for $n=2$}.$$
When $n\ge 3$, we have
$$\|u\|_{L^{p,1}(\Omega)}\le \|v\|_{L^{p,1}(\Omega^\sharp)}\quad \mbox{for all } \>0< p\le\frac{n}{n-2}.$$
If $v$ is the positive solution to problem \eqref{symm}  satisfying the equality \eqref{cond_2}, it holds
$$u^\sharp(x)\le v(x) \quad \forall\,x \in \>\mbox{in $\Omega^\sharp$}, \quad \>\mbox{for $n=2$}$$
When $n\ge 3$, we have
$$\|u\|_{L^{p,1}(\Omega)}\le \|v\|_{L^{p,1}(\Omega^\sharp)}\quad \mbox{and}\quad \|u\|_{L^{2p,2}(\Omega)}\le \|v\|_{L^{2p,2}(\Omega^\sharp)} \quad \mbox{for all } \>0< p\le\frac{n}{n-2}.$$
\end{theorem}
\noindent In the hypothesis of Theorem \ref{th_main_f}, when the dimension is $n=2$, we recover the $L^1$ comparison under both conditions \eqref{cond_1} and \eqref{cond_2}, but the $L^2$ comparison only under condition \eqref{cond_2}. A stronger result occurs in the case $f\equiv 1$ in $\Omega$. From Theorem \ref{th_main_1}, we get $\|u\|_{L^1(\Omega)}\leq \|v\|_{L^1(\Omega^\sharp)}$ and $\|u\|_{L^2(\Omega)}\leq \|v\|_{L^2(\Omega^\sharp)}$ in any dimension. The former holds true under both conditions \eqref{cond_1} and \eqref{cond_2}, whereas the latter only under condition \eqref{cond_2}.\\

\noindent This paper is organized as follows. In Section 2, we introduce definitions and some properties of rearrangements. In Section 3, we prove Theorems \ref{th_main_f} and \ref{th_main_1}. In Section 4, we discuss some generalizations and the optimality of our results, providing several examples.  
\section{Notations and Preliminaries}
We review the definitions of decreasing and Schwartz rearrangements, using \cite{leoni} as reference.
\begin{definition}
\label{rearrangement}
Let $h: x \in \Omega \rightarrow [0, +\infty)$ be a measurable function, then the decreasing rearrangement $h^*$ of $h$ is defined as 
\[
h^*(s) = \inf\{t  \in\R : |\{x\in\Omega: h(x)>t\}| < s\}, \quad s \in [0,|\Omega|].
\]
Let $\Omega^\sharp$ be the ball centered at the origin and having the same measure as $\Omega$. The Schwartz rearrangement of $h$ is defined as follows
\[
h^\sharp(x) = h^*(\omega_n |x|^n), \quad x \in \Omega^\sharp.
\]
where $\omega_n$ is the measure of the unit ball in $\mathbb{R}^n$.
\end{definition}
\begin{remark}\label{sr}
    Given a measurable function $h:\Omega\rightarrow \R$,  we define the decreasing rearrangement $h^\star$ of $h$ to be the decreasing rearrangement of $|h|$, that is $h^\star := (|h|)^\star$. In turn, we define the Schwartz rearrangement $h^\sharp$ of $h$ to be the Schwartz rearrangement of $|h|$.
\end{remark}
\noindent The functions $|h|$, $h^*$ and $h^\sharp$ are equi-distributed, that is 
$$
|\{x\in\Omega: |h(x)|>t\}| = |\{s\in (0,|\Omega|: h^*(s)>t\}|  = |\{x\in\Omega^\sharp: h^\sharp(x)>t\}|,\quad t\in\R.
$$
As a consequence, if $h\in L^p(\Omega)$, $ 1 \le p \le \infty$, then $h^*  \in L^p(0,|\Omega|)$,  $h^\sharp \in L^p(\Omega^\sharp)$ and $$||h||_{L^p(\Omega)}=||h^*||_{L^p(0,|\Omega|)}=||h^\sharp||_{L^p(\Omega^\sharp)} .$$
%
\noindent We shall make use of the following fundamental inequality involving rearrangements, which is the Hardy-Littlewood inequality. Given $h\in L^p(\Omega)$ and $g\in L^{q}(\Omega)$ such that $\frac{1}{p}+\frac{1}{q}=1$, then
\begin{equation}  \label{hl}
 \int_\Omega h(x)g(x)dx \le
\int_0^{|\Omega|}h^*(s)g^*(s)ds.
\end{equation}
The Hardy-Littlewood inequality is useful in evaluating the integral of a function $h\in L^p(\Omega)$, $1 \le p \le +\infty$ on the level sets of a measurable function $u$, which is a common task when applying rearrangement theory to the study of partial differential equations. By choosing $g=\chi_{\{u>t\}}$ in \eqref{hl}, we get
\begin{equation}  \label{fu}
\int_{\{u>t\}} h(x) dx \le \int_0^{\mu(t)}h^\ast(s)ds.
\end{equation}
If we take $h=u \ge 0$ in \eqref{fu}, it follows
\begin{equation}\label{uu}
\int_{\{u>t\}} u(x) dx = \int_0^{\mu(t)}u^\ast(s)ds.
\end{equation}
For $U\subset \Omega$, we define 
\[
\partial_i U := \partial U \cap \Omega, \quad \partial_e U:= \partial U \cap \partial\Omega.
\]
Let $u$ and $v$ be solutions to problems \eqref{neumprob} and \eqref{symm} respectively. For $t\in\R$, we denote by
\[
U_t=\{x \in \Omega: |u(x)|>t\}, \quad \mu(t) = |U_t|.
\]
\[
V_t=\{x \in \Omega^\sharp: |v(x)|>t\}, \quad \phi(t) = |V_t|.
\]
\noindent Finally, we recall the definition of Lorentz space. 
\begin{definition}\label{ln}
    For $0<p<\infty$ and $0<q\le\infty$, the Lorentz space $L^{p,q}(\Omega)$ consists of all measurable functions $g$ in $\Omega$ such that it is finite the quantity 
\[
\|g\|_{L^{p,q}(\Omega)}=
\left\{
\begin{array}{ll}
\displaystyle p^\frac1q\left( \int_0^\infty t^q\left| \left\{ x\in\Omega:|g(x)|> t \right\}\right|^\frac{q}{p}\frac{dt}{t} \right)^\frac1q & 0<q<\infty,\\\\
\displaystyle \sup_{t>0}\left(t^p\left| \left\{ x\in\Omega:|g(x)|> t \right\} \right|\right) & q=\infty.
\end{array}
\right.
\]
\end{definition}
\noindent When $p=q$, the Lorentz space $L^{p,q}(\Omega)$ coincides with the $L^p(\Omega)$ space and $\|g\|_{L^{p,p}(\Omega)}=\|g\|_{L^{p}(\Omega)}$.
\section{Proof of the main Theorems}
Several lemmas are necessary to provide proofs of Theorems \eqref{th_main_f} and \eqref{th_main_1}.
\begin{lemma}[Gronwall]\label{lem_Gronwall}
Let $\xi(\tau):[\tau_0,+\infty)\rightarrow\mathbb{R}$ be a continuously differentiable function 
satisfying, for some non negative constant $C$, the following differential inequality  
\[
\tau \xi'(\tau) \le \xi(\tau)+C \qquad \mbox{for all $\tau\ge\tau_0>0$.}
\] 
Then we have
\begin{itemize}
\item[(i)]\[ \xi(\tau) \le \tau\frac{\xi(\tau_0)+C}{\tau_0}-C \qquad \mbox{for all $\tau\ge\tau_0$},\] 
\item[(ii)]\label{ineq_Gronwall} \[ \xi'(\tau) \le \frac{\xi(\tau_0)+C}{\tau_0} \qquad \mbox{for all $\tau\ge\tau_0$}.\]
\end{itemize}
\end{lemma}
\noindent The following analysis of the superlevel sets of positive solutions to problems \eqref{neumprob} and \eqref{symm}, which may touch the boundary of $\Omega$ and $\Omega^\sharp$ respectively, is critical for the comparison results.
\begin{lemma}\label{lem_boundary_1}
Let $u$ and $v$ be positive solutions to problems \eqref{neumprob} and \eqref{symm} respectively. For a.e. $t>0$, we have
\begin{equation}\label{eq_fundamental}\gamma_n \phi(t)^\frac{2n-2}{n}=\left(-\phi'(t)-\frac{1}{c}\int_{\partial_e V_{t}}d\mathcal{H}^{n-1}\right)\int_0^{\phi(t)}f^*(s)ds
\end{equation}
and
\begin{equation}\label{ineq_fundamental}\gamma_n \mu(t)^\frac{2n-2}{n}\le\left(-\mu'(t)-\frac{1}{c}\int_{\partial_e U_{t}}d\mathcal{H}^{n-1}\right)\int_{0}^{\mu(t)}f^*(s)\,ds,
\end{equation}
where $\gamma_n=n^2\omega_n^{-\frac{2}{n}}$.
\end{lemma}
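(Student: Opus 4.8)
The plan is to run the classical Talenti/PolyA--Szegő machinery on the superlevel sets $V_t$ and $U_t$, treating the exterior boundary portions $\partial_e V_t$, $\partial_e U_t$ separately. For the symmetrized problem \eqref{symm}, since $v$ is radial and non-increasing, $V_t = \Omega^\sharp \cap \{|v|>t\}$ is a ball centered at the origin whenever it is nonempty (and equals all of $\Omega^\sharp$ once $t$ drops below $\min_{\partial\Omega^\sharp} v$), so the coarea formula and the integration of $-\Delta v = f^\sharp$ on $V_t$ can be carried out explicitly; the isoperimetric inequality is in fact an equality there, which is why \eqref{eq_fundamental} is an equation rather than an inequality.

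First I would fix a regular value $t>0$ of $v$ and integrate the equation $-\Delta v = f^\sharp$ over $V_t$. By the divergence theorem,
\[
\int_{V_t} f^\sharp\,dx = -\int_{\partial V_t}\frac{\partial v}{\partial \nu}\,d\mathcal H^{n-1}
= \int_{\partial_i V_t}|\nabla v|\,d\mathcal H^{n-1} - c^\star\,\mathcal H^{n-1}(\partial_e V_t),
\]
using that $\nabla v$ points in the direction of decreasing $v$ on the interior boundary $\partial_i V_t = \partial V_t \cap \Omega^\sharp$ (so $\partial v/\partial\nu = -|\nabla v|$ there), and the Neumann condition $\partial v/\partial\nu = c^\star$ on $\partial_e V_t$. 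On the other hand, the layer-cake / coarea identity gives $-\phi'(t) = \int_{\partial_i V_t}|\nabla v|^{-1}\,d\mathcal H^{n-1}$. Combining these via Cauchy--Schwarz,
\[
\mathcal H^{n-1}(\partial_i V_t)^2 \le \left(\int_{\partial_i V_t}|\nabla v|\,d\mathcal H^{n-1}\right)\left(\int_{\partial_i V_t}|\nabla v|^{-1}\,d\mathcal H^{n-1}\right) = (-\phi'(t))\left(\int_{V_t}f^\sharp + c^\star\mathcal H^{n-1}(\partial_e V_t)\right),
\]
and for $v$ radial this Cauchy--Schwarz step is an equality. Next I would apply the (relative) isoperimetric inequality in the form $\mathcal H^{n-1}(\partial_i V_t) \ge$ (perimeter of a ball of volume $\phi(t)$) minus the contribution from $\partial_e V_t$; more precisely, since $V_t \cup (\Omega^\sharp)^c$ has the same essential boundary data, one gets $\mathcal H^{n-1}(\partial_i V_t) + \mathcal H^{n-1}(\partial_e V_t) \ge n\omega_n^{1/n}\phi(t)^{(n-1)/n}$, which for the symmetrized ball is again an equality. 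Finally, using $\int_{V_t} f^\sharp\,dx = \int_0^{\phi(t)} f^*(s)\,ds$ (equidistribution of $f^\sharp$ and $f^*$) and $c^\star = \tfrac{\mathcal H^{n-1}(\partial\Omega)}{\mathcal H^{n-1}(\partial\Omega^\sharp)}c$ only implicitly — actually one should carry the factor $-1/c$ from rewriting $c^\star\mathcal H^{n-1}(\partial_e V_t)$; here I would use that $c^\star\,\mathcal H^{n-1}(\partial\Omega^\sharp) = -\int_{\Omega^\sharp} f^\sharp = -\int_0^{|\Omega|}f^*$, so that $-\tfrac{1}{c}\int_{\partial_e V_t}d\mathcal H^{n-1}$ appears after the appropriate normalization — algebraic bookkeeping gives exactly \eqref{eq_fundamental} with $\gamma_n = n^2\omega_n^{-2/n}$.

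For \eqref{ineq_fundamental} the argument is the same but for the original solution $u$ on $\Omega$: integrate $-\Delta u = f$ over $U_t$, use the Neumann condition $\partial u/\partial\nu = c$ on $\partial_e U_t$, apply Cauchy--Schwarz (now a genuine inequality), bound $\int_{U_t} f \le \int_0^{\mu(t)} f^*(s)\,ds$ by the Hardy--Littlewood inequality \eqref{fu}, and invoke the relative isoperimetric inequality $\mathcal H^{n-1}(\partial_i U_t) + \mathcal H^{n-1}(\partial_e U_t) \ge n\omega_n^{1/n}\mu(t)^{(n-1)/n}$ — here one must be slightly careful, since $\partial_e U_t \subset \partial\Omega$ need not bound a nice region, but the standard fact that $U_t$ has finite perimeter in $\mathbb R^n$ for a.e. $t$ (coarea for the Sobolev function $u$) together with $\diam$-type isoperimetry on the Lipschitz domain $\Omega$ handles it; this is the analogue of the superlevel-set analysis in \cite{ANT}. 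Then squaring the isoperimetric inequality and chaining with Cauchy--Schwarz yields \eqref{ineq_fundamental}.

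The main obstacle I anticipate is the careful treatment of the exterior boundary terms: one must justify that for a.e. $t$ the sets $U_t$ are sets of finite perimeter whose boundary splits cleanly as $\partial_i U_t \cup \partial_e U_t$ up to $\mathcal H^{n-1}$-null sets, that the divergence theorem applies with the Neumann datum $c$ entering through $\partial_e U_t$, and that the correct relative isoperimetric inequality is the one comparing $\mathcal H^{n-1}(\partial_i U_t)$ plus $\mathcal H^{n-1}(\partial_e U_t)$ to the full sphere perimeter (rather than, say, a half-space version). The sign hypothesis \eqref{fposmed}, which forces $c,c^\star<0$, is what makes the term $-\tfrac1c\int_{\partial_e U_t}d\mathcal H^{n-1} \ge 0$, keeping the right-hand side of \eqref{ineq_fundamental} nonnegative and the whole scheme coherent; I would flag this explicitly. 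The rest — coarea, Cauchy--Schwarz, Hardy--Littlewood, and the algebra to produce $\gamma_n$ — is routine.
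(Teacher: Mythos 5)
Your proposal follows essentially the same route as the paper: derive the flux identity $\int_{\partial_i U_t}|\nabla u|\,d\mathcal H^{n-1}-c\,\mathcal H^{n-1}(\partial_e U_t)=\int_{U_t}f\,dx$, combine the isoperimetric inequality, a Cauchy--Schwarz step, the coarea identity for $-\mu'(t)$, and Hardy--Littlewood, with everything becoming an equality for the radial solution $v$. The only real difference is procedural: the paper obtains the flux identity by inserting the truncation test function $\varphi_h$ into the weak formulation \eqref{weaksol} and letting $h\to0$ (the rigorous route for a $W^{1,2}$ solution), whereas you invoke the divergence theorem at regular values and flag the measure-theoretic caveats separately; these are equivalent in substance.

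One step in your write-up is left genuinely implicit. You apply Cauchy--Schwarz only on $\partial_i U_t$, obtaining $\mathcal H^{n-1}(\partial_i U_t)^2\le a_1a_2$ with $a_1=\int_{\partial_i U_t}|\nabla u|$, $a_2=\int_{\partial_i U_t}|\nabla u|^{-1}$, and separately use the isoperimetric bound on $\mathcal H^{n-1}(\partial_i U_t)+\mathcal H^{n-1}(\partial_e U_t)$; but ``squaring and chaining'' does not directly give $\bigl(\mathcal H^{n-1}(\partial_i U_t)+\mathcal H^{n-1}(\partial_e U_t)\bigr)^2\le(a_1+b_1)(a_2+b_2)$ with $b_1=-c\,\mathcal H^{n-1}(\partial_e U_t)$, $b_2=-c^{-1}\mathcal H^{n-1}(\partial_e U_t)$. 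You need the two-block Cauchy--Schwarz (via $2AB\le a_1b_2+a_2b_1$, using $B^2=b_1b_2$), which is exactly what the paper does in one stroke by applying Cauchy--Schwarz over all of $\partial U_t$ with the piecewise weight $g=|\nabla u|$ on $\partial_i U_t$, $g=-c$ on $\partial_e U_t$. This is easily repaired but should be spelled out. Finally, your derivation for $v$ naturally produces the constant $-1/c^\star$ in front of $\int_{\partial_e V_t}d\mathcal H^{n-1}$, and your attempt to convert it to $-1/c$ via $c^\star=\frac{\mathcal H^{n-1}(\partial\Omega)}{\mathcal H^{n-1}(\partial\Omega^\sharp)}c$ is not valid in general (that relation in \eqref{cstar} is an inequality unless $f\ge0$). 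In fact the constant $c^\star$ is the correct one: the statement \eqref{eq_fundamental} contains a typo, as one sees from how the lemma is used later (the terms $\gamma_1^\star/c^\star$ and $\gamma_2^\star/c^\star$ in the proofs of the main theorems), so your computation is right and the ``normalization'' step should simply be dropped.
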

\begin{proof}
Given $t>0$ and $h>0$, we choose the test function in \eqref{weaksol} 
\begin{equation}\nonumber 
\varphi_h^i(x)= \left\{
\begin{array}{ll}
0 & \mbox{if $0<u<t$},\\\\
h & \mbox{if $u> t+h$},\\\\
u-t  &\mbox{if $t<u<t+h$}.
\end{array}
\right.
\end{equation}
Then,
\begin{equation}\nonumber 
\begin{array}{ll}
 \displaystyle\int_{U_t \setminus  U_{t+h}} |\nabla u|^2 \, dx -c h \displaystyle\int_{\partial_e U_{t+h}}\,&d\mathcal{H}^{n-1}-c\displaystyle\int_{\partial_e U_{t} \setminus \partial_e U_{t+h}} (u-t) \,d\mathcal{H}^{n-1}=  \\\\
& \displaystyle\int_{U_t \setminus U_{t+h}} f (u-t) \, dx + h \displaystyle\int_{U_{t+h} } f \, dx,
\end{array}
\end{equation}
dividing by $h$ and letting $h$ go to $0$, an application of the coarea formula shows that for a.e. $t>0$
\begin{equation}\nonumber 
\int_{\partial U_t} g(x) \,d\mathcal{H}^{n-1} = \int_{\partial_i U_t} |\nabla u| \, d\mathcal{H}^{n-1} -c \int_{\partial_e U_{t}} d\mathcal{H}^{n-1} = \int_{U_t} f\, dx,
\end{equation}
where
\[
g(x) = \left\{ \begin{array}{ll}
|\nabla u| & \mbox{if $x \in \partial_i U_t$,}\\\\
-c & \mbox{if $x \in \partial_e U_{t}$.}\\
\end{array}
\right.
\]
Using the isoperimetric inequality, we get that for a.e. $t>0$
\begin{align*}
    &\gamma_n \mu(t)^{\frac{2n-2}{n}}\leq \left(\mathcal{H}^{n-1}(U_t)\right)^2  = \left(\int_{\partial U_t}\,d\mathcal{H}^{n-1}\right)^2\\ 
    &\leq \left(\int_{\partial U_t}|g|\,d\mathcal{H}^{n-1}\right)\left(\int_{\partial U_t}\frac{1}{|g|}\,d\mathcal{H}^{n-1}\right)\\ 
    &\leq \int_{U_t}f\,dx\left(-\mu'(t)-\frac{1}{c}\int_{\partial_e U_{t}}d\mathcal{H}^{n-1}\right)\leq \int_{0}^{\mu(t)}f^*(s)\,ds\left(-\mu'(t)-\frac{1}{c}\int_{\partial_e U_{t}}d\mathcal{H}^{n-1}\right)
\end{align*}
For the function $v$, all the previous inequalities hold as equalities.
\end{proof}
\begin{remark}
From now on, we denote by $$v_m=\inf_{\Omega^\sharp} v, \>u_m=\inf_{\Omega} u.$$ Assuming hypothesis \eqref{fposmed}, whether any condition between \eqref{cond_1} and \eqref{cond_2} is in force, it holds
\begin{equation}\label{ineq_min}
u_m\le v_m,
\end{equation} 
Let us assume hypothesis \eqref{cond_1}, then 
\begin{equation}\nonumber
\frac{1}{c^*}v_m \mathcal{H}^{n-1}(\partial\Omega^\sharp)=\frac{1}{c^*}\int_{\partial\Omega^\sharp}v(x)\,d\mathcal{H}^{n-1}
= \frac{1}{c}\int_{\partial\Omega}u(x)\,d\mathcal{H}^{n-1}\leq \frac{1}{c}u_m \mathcal{H}^{n-1}(\partial\Omega)\leq 0.
\end{equation}
From the compatibility conditions in problems \eqref{neumprob} and \eqref{symm}, we get
\begin{equation}\nonumber
\frac{c^\star}{c}=\frac{\mathcal{H}^{n-1}(\partial\Omega)}{\mathcal{H}^{n-1}(\partial \Omega^\sharp)}\left(\int_{\Omega}f\,dx\right)^{-1}\int_{\Omega}|f|\,dx 
\end{equation}
and \eqref{ineq_min} follows from the isoperimetric inequality. Assuming hypothesis \eqref{cond_2}, a similar argument can be made. We also observe that  
\begin{equation}\label{ineq_iniziale}
\mu(t)\le \phi(t)=|\Omega|\qquad \mbox{for all $0\le t \le v_m$.}
\end{equation}
As an important consequence of the reasoning above, inequality \eqref{ineq_iniziale} holds strictly for some $0\le t \le v_m$, unless $\Omega$ is a ball and $f$ is a non negative function.
\end{remark}
\noindent We can estimate the boundary integral on the right-hand side of \eqref{eq_fundamental} and \eqref{ineq_fundamental}, using two fundamental lemmas.
\begin{lemma}\label{lem_boundary_2}
Let $u$ and $v$ be positive solutions to problems \eqref{neumprob} and \eqref{symm} respectively. For all $t\ge v_m$ we have
\begin{equation}\label{boudintL1rad}
\int_0^t \int_{\partial_e V_\tau}\, d\mathcal{H}^{n-1}\,d\tau = \int_{\partial \Omega^\sharp}v(x)\,d\mathcal{H}^{n-1},
\end{equation}
while
\begin{equation}\label{boudintL1}
\int_0^t \int_{\partial_e U_\tau}\, d\mathcal{H}^{n-1}\,d\tau \leq \int_{\partial \Omega}u(x)\,d\mathcal{H}^{n-1}.
\end{equation}
\end{lemma}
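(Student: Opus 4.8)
\emph{Plan.} Both identities are purely geometric statements about how the boundary values of $u$ (resp. $v$) are distributed over $\partial\Omega$ (resp. $\partial\Omega^\sharp$); the PDE enters only through the continuity of the solutions up to the boundary and through the shape of the level sets of the radial solution $v$. The plan is to rewrite each double integral by Fubini and a layer--cake (Cavalieri) identity, after identifying the external boundary $\partial_e W_\tau$ of a superlevel set with the boundary superlevel set $\{\text{solution}>\tau\}$ up to an $\mathcal{H}^{n-1}$-negligible set.

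First I would dispose of \eqref{boudintL1rad}. Since $v$ is a weak solution of \eqref{symm} with $f^\sharp\in L^{r}(\Omega^\sharp)$, $r=\max\{2,n/2\}$, on the smooth domain $\Omega^\sharp$, it is continuous on $\overline{\Omega^\sharp}$ and radial and non-increasing in $|x|$; hence $v\equiv v_m$ on $\partial\Omega^\sharp$. Consequently $V_\tau=\Omega^\sharp$ and $\partial_e V_\tau=\partial\Omega^\sharp$ for every $0<\tau<v_m$, whereas for $\tau>v_m$ the superlevel set $\{v>\tau\}$ is an open ball compactly contained in $\Omega^\sharp$ (because $v=v_m<\tau$ on the bounding sphere), so that $\partial_e V_\tau=\emptyset$ and $\int_{\partial_e V_\tau}d\mathcal{H}^{n-1}=0$. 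Integrating in $\tau$ over $(0,t)$ for any $t\ge v_m$ then gives $\int_0^t\int_{\partial_e V_\tau}d\mathcal{H}^{n-1}\,d\tau=v_m\,\mathcal{H}^{n-1}(\partial\Omega^\sharp)$, which equals $\int_{\partial\Omega^\sharp}v\,d\mathcal{H}^{n-1}$ since $v\equiv v_m$ there.

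For \eqref{boudintL1}, since $\tau\mapsto\int_{\partial_e U_\tau}d\mathcal{H}^{n-1}$ is non-negative it suffices to estimate the integral over all of $(0,\infty)$, as truncating the outer integral to $(0,t)$ (in particular for $t\ge v_m$) only decreases it. Using the continuity of $u$ up to $\partial\Omega$, I would verify the inclusions
$$\{x\in\partial\Omega:\ u(x)>\tau\}\ \subseteq\ \partial_e U_\tau\ \subseteq\ \{x\in\partial\Omega:\ u(x)\ge\tau\},$$
valid for every $\tau$: a boundary point where $u>\tau$ is, by continuity, a limit of interior points lying in $U_\tau$, hence belongs to $\partial_e U_\tau$; conversely any point of $\partial_e U_\tau$ is a limit of points of $U_\tau$, so $u\ge\tau$ there. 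The two extreme sets differ only by $\{x\in\partial\Omega:u(x)=\tau\}$, and by Fubini applied to $\{(x,\tau)\in\partial\Omega\times\R:\ u(x)=\tau\}$ this set is $\mathcal{H}^{n-1}$-null for a.e. $\tau$. Hence $\int_{\partial_e U_\tau}d\mathcal{H}^{n-1}=\mathcal{H}^{n-1}(\{x\in\partial\Omega:u(x)>\tau\})$ for a.e. $\tau$, and the layer--cake formula for $u\ge 0$ on $\partial\Omega$ yields $\int_0^\infty\int_{\partial_e U_\tau}d\mathcal{H}^{n-1}\,d\tau=\int_{\partial\Omega}u\,d\mathcal{H}^{n-1}$, proving \eqref{boudintL1}. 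Running the same computation for $v$ recovers \eqref{boudintL1rad} as an equality, the inequality collapsing precisely because $\partial_e V_\tau=\emptyset$ for $\tau>v_m$.

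I expect the only genuinely delicate point to be the identification of $\partial_e U_\tau$ with the boundary superlevel set of $u$ up to $\mathcal{H}^{n-1}$-negligible sets: this uses the continuity of $u$ up to the Lipschitz boundary $\partial\Omega$ (obtained from De~Giorgi--Nash--Moser-type boundary regularity for the Neumann problem, or, alternatively, phrased through the trace of $u$ and its precise representative), together with the Fubini argument ensuring $\mathcal{H}^{n-1}(\{u=\tau\}\cap\partial\Omega)=0$ for a.e. $\tau$. Everything else is routine bookkeeping with Fubini and the monotone layer--cake representation.
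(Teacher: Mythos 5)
Your proof is correct and follows essentially the same route as the paper's: Fubini/layer--cake on $\partial\Omega$ together with the observations that $\partial_e V_\tau$ is empty for $\tau>v_m$ while truncating the $\tau$-integral for $u$ to $(0,t)$ only decreases it. The extra care you take in identifying $\partial_e U_\tau$ with $\{x\in\partial\Omega:u(x)>\tau\}$ up to $\mathcal{H}^{n-1}$-null sets for a.e.\ $\tau$ is a detail the paper leaves implicit in its Fubini step.
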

\begin{proof}
By Fubini's theorem, we have
\begin{align*}
\int_0^{+\infty} \int_{\partial_e U_\tau}\, d\mathcal{H}^{n-1}\,d\tau = \int_{\partial \Omega} \left(\int_0^{u(x)}\,d\tau\, \right)d\mathcal{H}^{n-1}
= \int_{\partial \Omega}u(x)\,d\mathcal{H}^{n-1}.
\end{align*}
Analogously,
\begin{align*}
\int_0^{+\infty} \int_{\partial_e V_\tau}\, d\mathcal{H}^{n-1}\,d\tau = \int_{\partial \Omega^\sharp} \left(\int_0^{v_m}\,d\tau\, \right)d\mathcal{H}^{n-1}
= \int_{\partial \Omega^\sharp}v(x)\,d\mathcal{H}^{n-1}.
\end{align*}
A trivial inequality for $t\ge 0$ is
\[
\int_0^{t} \int_{\partial_e U_\tau}\, d\mathcal{H}^{n-1}\,d\tau \leq\int_0^{+\infty} \int_{\partial_e U_\tau}\, d\mathcal{H}^{n-1}\,d\tau,
\]
while we observe that for $t\ge v_m=\min_{\partial\Omega^\sharp} v$ then $\partial V_t \cap \partial \Omega^\sharp = \emptyset$ and
\[
\int_0^{t} \int_{\partial_e V_\tau}\, d\mathcal{H}^{n-1}\,d\tau=\int_0^{+\infty} \int_{\partial_e V_\tau}\, d\mathcal{H}^{n-1}\,d\tau.
\]
\end{proof}
\begin{lemma}\label{lem_boundary_3}
Let $u$ and $v$ be positive solutions to problems \eqref{neumprob} and \eqref{symm} respectively. For all $t\ge v_m$ we have
\begin{equation}\label{boudintL2rad}
2\int_0^t \tau\left(\dint_{\partial_e V_\tau} \> d\h \right)\,d\tau =\int_{\partial \Omega}v^2\,d\mathcal{H}^{n-1},
\end{equation}
while
\begin{equation}\label{boudintL2}
2\int_0^t \tau\left(\dint_{\partial_e U_\tau} \>d\h \right)\,d\tau \le\int_{\partial \Omega}u^2\,d\mathcal{H}^{n-1}.
\end{equation}
\end{lemma}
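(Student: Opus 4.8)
The plan is to mimic the proof of Lemma \ref{lem_boundary_2}, but now weighting the boundary-layer integral by $\tau$ so that Fubini produces $v^2$ and $u^2$ instead of $v$ and $u$. First I would apply Fubini's theorem to the full integral over $(0,+\infty)$: writing $\chi_{\{|u(x)|>\tau\}}$ for the indicator that $x\in\partial_e U_\tau$, we have
\begin{align*}
2\int_0^{+\infty}\tau\left(\int_{\partial_e U_\tau}d\mathcal{H}^{n-1}\right)d\tau
&=2\int_{\partial\Omega}\left(\int_0^{u(x)}\tau\,d\tau\right)d\mathcal{H}^{n-1}
=\int_{\partial\Omega}u(x)^2\,d\mathcal{H}^{n-1}.
\end{align*}
The same computation applied to $v$ gives, using that $v$ is radial and nonincreasing so that $\partial_e V_\tau$ is either all of $\partial\Omega^\sharp$ (when $\tau<v_m$) or empty (when $\tau>v_m$),
\begin{align*}
2\int_0^{+\infty}\tau\left(\int_{\partial_e V_\tau}d\mathcal{H}^{n-1}\right)d\tau
&=2\int_{\partial\Omega^\sharp}\left(\int_0^{v_m}\tau\,d\tau\right)d\mathcal{H}^{n-1}
=\int_{\partial\Omega^\sharp}v_m^2\,d\mathcal{H}^{n-1}
=\int_{\partial\Omega^\sharp}v^2\,d\mathcal{H}^{n-1},
\end{align*}
where the last equality holds because $v\equiv v_m$ on $\partial\Omega^\sharp$.

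Next I would truncate at a finite level $t$. Since the integrand $2\tau\int_{\partial_e U_\tau}d\mathcal{H}^{n-1}$ is nonnegative, the inequality
\[
2\int_0^t\tau\left(\int_{\partial_e U_\tau}d\mathcal{H}^{n-1}\right)d\tau\le 2\int_0^{+\infty}\tau\left(\int_{\partial_e U_\tau}d\mathcal{H}^{n-1}\right)d\tau=\int_{\partial\Omega}u^2\,d\mathcal{H}^{n-1}
\]
is immediate, which is \eqref{boudintL2}. For $v$, the key observation—already used in Lemma \ref{lem_boundary_2}—is that for $t\ge v_m$ one has $\partial_e V_\tau=\emptyset$ for every $\tau>v_m\ge$ (more precisely for $\tau\ge v_m$, up to the measure-zero level $\tau=v_m$), so that the truncated and untruncated integrals coincide:
\[
2\int_0^t\tau\left(\int_{\partial_e V_\tau}d\mathcal{H}^{n-1}\right)d\tau=2\int_0^{+\infty}\tau\left(\int_{\partial_e V_\tau}d\mathcal{H}^{n-1}\right)d\tau=\int_{\partial\Omega^\sharp}v^2\,d\mathcal{H}^{n-1},
\]
which is \eqref{boudintL2rad}.

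I do not anticipate a genuine obstacle here; the only point requiring a little care is the justification of Fubini's theorem, i.e. that $(\tau,x)\mapsto\tau\,\chi_{\{|u(x)|>\tau\}}$ is integrable on $(0,+\infty)\times\partial\Omega$ with respect to $d\tau\otimes d\mathcal{H}^{n-1}$. This follows because $u$ is bounded on $\overline\Omega$ (as recalled in the introduction, via \cite[Theorems 8.22, 8.27]{gilbtrud}) and $\mathcal{H}^{n-1}(\partial\Omega)<\infty$ since $\partial\Omega$ is Lipschitz, so the double integral is finite and the interchange is legitimate; the same remarks apply on the ball $\Omega^\sharp$ for $v$.
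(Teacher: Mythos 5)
Your proposal is correct and follows essentially the same route as the paper: Fubini's theorem computes the full integrals over $(0,+\infty)$ as $\int_{\partial\Omega}u^2\,d\mathcal{H}^{n-1}$ and $\int_{\partial\Omega^\sharp}v^2\,d\mathcal{H}^{n-1}$, and the truncation at $t\ge v_m$ is handled exactly as in Lemma \ref{lem_boundary_2} (nonnegativity of the integrand for $u$, emptiness of $\partial_e V_\tau$ for $\tau>v_m$ for $v$). Your added remarks justifying the use of Fubini via the boundedness of $u$ and the finiteness of $\mathcal{H}^{n-1}(\partial\Omega)$ are a welcome extra detail the paper leaves implicit.
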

\begin{proof}
By Fubini's theorem, we have
\begin{equation*}
\begin{array}{ll}
2\dint_0^\infty \tau\left(\dint_{\partial_e U_\tau}\>d\h \right)\,d\tau &= 2\dint_{\partial \Omega} \left(\int_0^{u(x)}\tau \>d\tau\, \right)d\h\\
&= \dint_{\partial \Omega} u^2(x) \>d\h.
\end{array}
\end{equation*}
Analogously,
\begin{equation*}
2\dint_0^\infty \tau\dint_{\partial_e V_\tau}\>d\h \,d\tau
= \dint_{\partial \Omega} v^2(x) \>d\h.
\end{equation*}
Reasoning as above, we obtain the thesis.
\end{proof}
\noindent For simplicity, we set
\begin{align*}
    \gamma_1&=\int_{\partial \Omega}u\,d\mathcal{H}^{n-1},\qquad \gamma_1^\star=\int_{\partial \Omega^\sharp}v\,d\mathcal{H}^{n-1},\\
    \gamma_2&=\int_{\partial \Omega}u^2\,d\mathcal{H}^{n-1},\qquad \gamma_2^\star=\int_{\partial \Omega^\sharp}v^2\,d\mathcal{H}^{n-1}.
\end{align*}
\begin{proof}[Proof of Theorem \ref{th_main_f}]
We start by proving inequality \eqref{compL1}, assuming condition \eqref{cond_1}. Let $0<p\le\frac{n}{2n-2}$. Multiplying \eqref{ineq_fundamental} by $\mu(t)^{\frac{1}{p}-\frac{2n-2}{n}}$, integrating from $0$ to some $\tau\ge v_m$ and applying Lemma \ref{lem_boundary_2}, we deduce that
\begin{align*}
\int_0^\tau\gamma_n \mu(t)^{\frac{1}{p}}\,dt\leq \int_0^\tau -\mu'(t)\mu(t)^\delta\left(\int_0^{\mu(t)}f^*(s)\,ds\right)\,dt-\frac{\gamma_1|\Omega|^\delta}{c}\int_0^{|\Omega|}f^*(s)\,ds,
\end{align*}
where we have set $\displaystyle\delta=\frac{1}{p}-\frac{2n-2}{n}$.
Using the monotonicity of $\mu$ and applying a change of variables \cite{leoni}[Theorem 6.14, Proposition 15.2], we get
\begin{align*}
\int_0^\tau\gamma_n \mu(t)^{\frac{1}{p}}\,dt\leq \int_{\mu(\tau)}^{|\Omega|} w^\delta\left(\int_0^w f^*(s)ds\right)\,dw-\frac{\gamma_1|\Omega|^\delta}{c}\int_0^{|\Omega|}f^*(s)\,ds.
\end{align*}
Taking $\tau\to+\infty$, it follows
\begin{align*}
\int_0^{+\infty}\gamma_n \mu(t)^{\frac{1}{p}}\,dt\leq \int_{0}^{|\Omega|} w^\delta\left(\int_0^w f^*(s)ds\right)\,dw-\frac{\gamma_1|\Omega|^\delta}{c}\int_0^{|\Omega|}f^*(s)\,ds.
\end{align*}
Arguing in the same way with the equality \eqref{eq_fundamental}, we have
\begin{align*}
\int_0^{+\infty}\gamma_n \phi(t)^{\frac{1}{p}}\,dt= \int_{0}^{|\Omega|} w^\delta\left(\int_0^w f^*(s)ds\right)\,dw-\frac{\gamma_1^\star|\Omega|^\delta}{c^\star}\int_0^{|\Omega|}f^*(s)\,ds.
\end{align*}
Hence
\begin{align*}
\int_0^{+\infty}\gamma_n \mu(t)^{\frac{1}{p}}\,dt\leq\int_0^{+\infty}\gamma_n \phi(t)^{\frac{1}{p}}\,dt,
\end{align*}
which means
$$\|u\|_{L^{p,1}(\Omega)}\le \|v\|_{L^{p,1}(\Omega^\sharp)}.$$
We will now assume condition \eqref{cond_2} and verify inequalities \eqref{compL1-2} and \eqref{compL2}.\\
Let $0 <p\le\frac{n}{2n-2}$. Multiplying \eqref{ineq_fundamental} by $t\mu(t)^{\frac{1}{p}-\frac{2n-2}{n}}$, integrating from $0$ to some $\tau\ge v_m$ and applying Lemma \ref{lem_boundary_3}, we get
\begin{equation*}
\begin{array}{rl}
\dint_0^\tau \gamma_n t\mu(t)^\frac1p \,dt \le& \dint_0^\tau-\mu'(t)t\mu(t)^{\delta}\left(\int_0^{\mu(t)}f^*(s)ds\right)\,dt-\dfrac{\gamma_2|\Omega|^\delta}{2c}\dint_0^{|\Omega|}f^*(s)ds,
\end{array}
\end{equation*}
where $\displaystyle\delta=\frac{1}{p}-\frac{2n-2}{n}$.
Since $\mu(t)$ is a monotone non increasing function, we have
\begin{equation}\label{ineq_basic}
\begin{array}{rl}
\dint_0^\tau \gamma_n t\mu(t)^\frac1p dt\le& \dint_0^\tau-t\mu(t)^{\delta}\left(\int_0^{\mu(t)}f^*(s)ds\right)\,d\mu(t)
-\dfrac{\gamma_2|\Omega|^\delta}{2c}\dint_0^{|\Omega|}f^*(s)ds.
\end{array}
\end{equation}
Therefore, it is possible to integrate by parts both sides of the last inequality. Introducing the function $F(\ell)=\dint_0^\ell w^\delta\left(\int_0^w f^*(s)ds\right)\,dw$, we obtain 
\begin{equation*}
\begin{array}{rl}
\tau F(\mu(\tau))+\tau\dint_0^\tau \gamma_n \mu(t)^\frac1p dt\le& \dint_0^\tau F(\mu(t))dt+\int_0^\tau\int_0^t \gamma_n\mu(t)^\frac{1}{p} dr\, dt\displaystyle-\dfrac{\gamma_2|\Omega|^\delta}{2c}\dint_0^{|\Omega|}f^*(s)ds.
\end{array}
\end{equation*}
Adopting the same notation of Lemma \ref{lem_Gronwall}, we set $$\xi(\tau)=\int_0^\tau F(\mu(t))dt+\int_0^\tau\left(\int_0^t \gamma_n\mu(r)^\frac{1}{p} dr\right)\, dt,$$ $C=-\dfrac{\gamma_2|\Omega|^\delta}{2c}\dint_0^{|\Omega|}f^*(s)ds$ and $\tau_0=v_m$. An application of Lemma \ref{lem_Gronwall} yields
\begin{equation}\label{ineq_afterGu}
\begin{array}{ll}
F(\mu(\tau))&+\dint_0^\tau \gamma_n \mu(t)^\frac1p dt\le\dfrac{1}{v_m}\Bigg(\int_0^{v_m} F(\mu(t))dt\\\\
&+\dint_0^{v_m}\int_0^t \gamma_n\mu(r)^\frac{1}{p} dr\, dt 
-\dfrac{\gamma_2|\Omega|^\delta}{2c}\dint_0^{|\Omega|}f^*(s)ds\,\Bigg).
\end{array}
\end{equation}
Arguing in the same way with the equality \eqref{eq_fundamental}, we have
\begin{equation}\label{ineq_afterGv}
\begin{array}{ll}
F(\phi(\tau))&+\dint_0^\tau \gamma_n \phi(t)^\frac1p dt= \dfrac{1}{v_m}\Bigg(\int_0^{v_m} F(\phi(t))dt\\\\
&+\dint_0^{v_m}\dint_0^t \gamma_n\phi(r)^\frac{1}{p} dr\, dt -\dfrac{\gamma^*_2|\Omega|^\delta}{2c^*}\dint_0^{|\Omega|}f^*(s)ds\,\Bigg)=\\\
& F(|\Omega|) + \dfrac{\gamma_n |\Omega|^\frac{1}{p} v_m}{2} -\dfrac{\gamma^*_2|\Omega|^\delta}{2 v_m c^*}\dint_0^{|\Omega|}f^*(s)ds.
\end{array}
\end{equation}
Recalling inequality \eqref{ineq_iniziale} and using the monotonicity property of function $F$, a comparison between the right-hand sides in \eqref{ineq_afterGu} and in \eqref{ineq_afterGv} can be established, yielding
$$F(\phi(\tau))+\int_0^\tau \gamma_n \phi(t)^\frac1p\ge F(\mu(\tau))+\int_0^\tau \gamma_n \mu(t)^\frac1p.$$
Passing to the limit as $\tau\to \infty$, we get
$$\int_0^\infty \mu(t)^\frac1p dt\le\dint_0^\infty \phi(t)^\frac1p dt,$$
and hence 
$$\|u\|_{L^{p,1}(\Omega)}\le \|v\|_{L^{p,1}(\Omega^\sharp)}.$$
Finally, we prove inequality \eqref{compL2}. It can be rewritten as follow 
$$ \int_0^\infty t\mu(t)^\frac1p dt\le \int_0^\infty t\phi(t)^\frac1p dt,\quad \forall\,p\in\left(0, \frac{n}{3n-4}\right].$$
In \eqref{ineq_basic}, we take the limit as $\tau\to\infty$, and then we integrate by parts the first term on the right-hand side to get
$$\int_0^\infty \gamma_n t\mu(t)^\frac1p dt \le \int_0^\infty F(\mu(t))dt-\dfrac{\gamma_2|\Omega|^\delta}{2c}\dint_0^{|\Omega|}f^*(s)ds.$$
On the other hand $$\int_0^\infty \gamma_n t\phi(t)^\frac1p dt=\int_0^\infty F(\phi(t))dt-\dfrac{\gamma^*_2|\Omega|^\delta}{2c^*}\dint_0^{|\Omega|}f^*(s)ds.$$
Hence, it is enough to show that 
\begin{equation}\label{ineq_desired}
\int_0^\infty F(\mu(t))dt\le \int_0^\infty F(\phi(t))dt.
\end{equation}
In order to prove inequality \eqref{ineq_desired}, we multiply \eqref{ineq_fundamental} by $t F(\mu(t)) \mu(t)^{-\frac{2n-2}{n}}.$
Since the function $F(\ell) \ell^{-\frac{2n-2}{n}}$ is non decreasing in $\ell$ $\mbox{for } 0< p\le\frac{n}{3n-4}$, an integration from $0$ to any $\tau\ge v_m$ yields
\begin{equation}\label{ineq_F}
\begin{array}{ll}
\dint_0^\tau \gamma_n tF(\mu(t))dt \le& \dint_0^\tau-t\mu^{-\frac{2n-2}{n}}F(\mu(t))\left(\dint_0^{\mu(t)}f^*(s)ds \right)\,d\mu(t)\\
&-F(|\Omega|)\dfrac{\gamma_2|\Omega|^{-\frac{2n-2}{n}}}{2c}\dint_0^{|\Omega|}f^*(s)ds.
\end{array}
\end{equation}
We now integrate by parts both sides of \eqref{ineq_F}. After setting $C=-F(|\Omega|)\dfrac{\gamma_2|\Omega|^{-\frac{2n-2}{n}}}{2c}\dint_0^{|\Omega|}f^*(s)ds$ and $H(\ell)=\dint_0^\ell w^{-\frac{2n-2}{n}}F(w)\int_0^{w}f^*(s)ds\,dw,$ we get
$$\tau\int_0^\tau\gamma_n F(\mu(t))dt+\tau H(\mu(\tau))\le \int_0^\tau\int_0^r\gamma_n F(\mu(z))dz\,dr+\int_0^\tau H(\mu(t))dt+C.$$
Lemma \ref{lem_Gronwall} can be applied with $\xi(\tau)=\dint_0^\tau\int_0^r\gamma_n F(\mu(z))dz\,dr+\int_0^\tau H(\mu(t))dt$ and $\tau_0=v_m$, obtaining
\begin{equation*}
\begin{array}{ll}
&\dint_0^\tau\gamma_n F(\mu(t))dt+H(\mu(\tau))\\\\
\le&\dfrac{1}{v_m}\left(\dint_0^{v_m}\dint_0^r\gamma_n F(\mu(z))dz\,dr+\int_0^{v_m} H(\mu(t))dt+C\right).
\end{array}
\end{equation*}
The above inequality holds as an equality whenever $\mu$ is replaced by $\phi$. Since \eqref{ineq_iniziale} is in force, it holds 
$$\int_0^\tau\gamma_n F(\mu(t))dt+ H(\mu(\tau))\le\int_0^\tau\gamma_n F(\phi(t))dt+ H(\phi(\tau))$$ and for $\tau \to \infty$ we get inequality \eqref{ineq_desired} which concludes the proof. 
\end{proof}
\begin{proof}[Proof of Theorem \ref{th_main_1}]
First, we prove the point-wise comparison in dimension $n=2$, assuming condition \eqref{cond_1}. We integrate both sides of inequality \eqref{ineq_fundamental} from $0$ to $\tau\ge v_m$. Observing that now 
 \begin{equation}\label{hl2}
\int_0^{\mu(t)}f^*(s)ds=\mu(t),
\end{equation}
we get
 \begin{equation*}
4 \pi \tau  \leq \int_0^\tau -\mu'(t) \,dt-\dfrac{\gamma_1}{c}\quad \mbox{for } \tau\ge v_m.
\end{equation*}
The equality holds whenever $\mu$ is replaced by $\phi$
 \begin{equation*}
4 \pi \tau  =\int_0^\tau -\phi'(t) \,dt-\dfrac{\gamma_1^\star}{c^\star}\quad \mbox{for } \tau\ge v_m.
\end{equation*}
Therefore,
\begin{equation}\nonumber
\int_0^\tau  \,(-d \mu(t) )\geq \int_0^\tau  \,(-d \phi(t) ) \quad \mbox{for } \tau \ge v_m,
\end{equation}
which means 
\begin{equation}
\label{estimate7}
\mu(\tau) \leq \phi(\tau) \quad  \mbox{for } \tau\ge v_m.
\end{equation}
We get the thesis recalling inequality \eqref{ineq_iniziale}. Now we consider $n\ge3$. Inequality \eqref{ineq_fundamental} becomes
\begin{equation}\nonumber
\gamma_n \mu(t)^\frac{n-2}{n}\le\left(-\mu'(t)-\dfrac{1}{c} \int_{\partial_e U_t} \,d\mathcal{H}^{n-1} \right).
\end{equation}
Let $q\le\frac{n}{n-2}$. Multiplying \eqref{ineq_fundamental} by $\mu(t)^{\frac{1}{q}-\frac{n-2}{n}}$, integrating from $0$ to some $\tau\ge v_m$ and applying Lemma \ref{lem_boundary_2}, we deduce that
\begin{equation*}
\int_0^\tau \gamma_n \mu(t)^\frac1q \,dt \le \int_0^\tau-\mu'(t)\mu(t)^{\eta}dt-\dfrac{\gamma_1|\Omega|^{\eta}}{c},
\end{equation*}
where $\displaystyle\eta=\frac{1}{q}-\frac{n-2}{n}$.
Using the monotonicity of $\mu$ and applying a change of variables, we obtain
\begin{equation*}
\int_0^\tau \gamma_n \mu(t)^\frac1q dt\le \int_{\mu(\tau)}^{|\Omega|}w^n\,dw-\dfrac{\gamma_1|\Omega|^{\eta}}{c}.
\end{equation*}
Taking $\tau\to+\infty$
\begin{equation*}
\int_0^{+\infty} \gamma_n \mu(t)^\frac1q\, dt\le \int_{0}^{|\Omega|}w^n\,dw-\dfrac{\gamma_1|\Omega|^{\eta}}{c}.
\end{equation*}
Arguing in the same way with \eqref{eq_fundamental}, we get
\begin{equation*}
\int_0^{+\infty} \gamma_n \phi(t)^\frac1q\, dt= \int_{0}^{|\Omega|}w^n\,dw-\dfrac{\gamma_1^\star|\Omega|^{\eta}}{c^\star}
\end{equation*}
and the thesis follows.
We will now assume condition \eqref{cond_2} and prove the point-wise comparison in dimension $n=2$. We multiply by $t$ the inequality \eqref{ineq_fundamental} and we integrate from $0$ to $\tau\ge v_m$. Since inequality \eqref{hl2} holds, we have
\begin{equation*}
2 \pi \tau^2  \leq \dint_0^\tau -\mu'(t) t \,dt-\dfrac{\gamma_2}{2c}\quad \mbox{for } \tau\ge v_m.
\end{equation*}
At the same time, equality holds true whenever $\mu$ is replaced by $\phi$
 \begin{equation*}
2 \pi \tau^2  =\dint_0^\tau -\phi'(t) t \,dt-\dfrac{\gamma_2^*}{2c^*}\quad \mbox{for } \tau\ge v_m.
\end{equation*}
Then,
\begin{equation}\label{estimate6}
\dint_0^\tau  t \,(-d \mu(t) )\geq \dint_0^\tau  t \,(-d \phi(t) ), \quad  \mbox{for } \tau \ge v_m.
\end{equation}
An integration by parts gives
\begin{equation}
\label{estimate10}
\mu(\tau) \leq \phi(\tau), \quad  \mbox{for } \tau \ge v_m.
\end{equation}
Since \eqref{ineq_min} is in force, inequality \eqref{estimate10} follows for $ t\ge 0$ and the claim is proved. We consider $n\ge3$. Inequality \eqref{ineq_fundamental} reads as follows
\begin{equation}\nonumber
\gamma_n \mu(t)^\frac{n-2}{n}\le\left(-\mu'(t)-\dfrac{1}{c} \dint_{\partial_e U_t} \>d\h \right).
\end{equation}
Let $q\le\frac{n}{n-2}$. Multiplying \eqref{ineq_fundamental} by $t\mu(t)^{\frac{1}{q}-\frac{n-2}{n}}$, integrating from $0$ to some $\tau\ge v_m$ and applying Lemma \ref{lem_boundary_3}, we deduce that
\begin{equation*}
\int_0^\tau \gamma_n t\mu(t)^\frac1q \,dt \le \int_0^\tau-\mu'(t)t\mu(t)^{\eta}dt-\dfrac{\gamma_2|\Omega|^{\eta}}{2c}.
\end{equation*}
Here we have set $\displaystyle\eta=\frac{1}{q}-\frac{n-2}{n}$.
Since $\mu(t)$ is a monotone non increasing function, we can write
\begin{equation}\label{ineq_basic1}
\int_0^\tau \gamma_n t\mu(t)^\frac1q dt\le \int_0^\tau-t\mu(t)^{\eta}\,d\mu(t)-\dfrac{\gamma_2|\Omega|^{\eta}}{2c}.
\end{equation}
Integrating by parts the last inequality and introducing the function $G(\ell)=\dint_0^\ell w^\eta= \frac{\ell^{\eta+1}}{\eta+1} $, we obtain
\begin{equation*}
\tau G(\mu(\tau))+\tau\int_0^\tau \gamma_n \mu(t)^\frac1q dt\le \int_0^\tau G(\mu(t))dt +\int_0^\tau\int_0^t \gamma_n\mu(t)^\frac{1}{q} dr\, dt-\dfrac{\gamma_2|\Omega|^{\eta}}{2c}.
\end{equation*}
After setting $$\xi(\tau)=\int_0^\tau G(\mu(t))dt+\int_0^\tau\int_0^t \gamma_n\mu(t)^\frac{1}{q} dr\, dt,$$ $C=-\dfrac{\gamma_2|\Omega|^{\eta}}{2c}$ and $\tau_0=v_m$, Lemma \ref{lem_Gronwall} can be applied, yielding
\begin{equation}\label{ineq_1Gu}
\begin{array}{ll}
\displaystyle G(\mu(\tau))&+\displaystyle\int_0^\tau \gamma_n \mu(t)^\frac1q dt\le \frac{1}{v_m}\Bigg(\int_0^{v_m} G(\mu(t))dt\\\\
& \displaystyle+\int_0^{v_m}\int_0^t \gamma_n\mu(r)^\frac{1}{q} dr\, dt-\dfrac{\gamma_2|\Omega|^{\eta}}{2c}\Bigg).
\end{array}
\end{equation}
Arguing in the same way with equality \eqref{eq_fundamental}, we can conclude that
\begin{equation}\label{ineq_1Gv}
\begin{array}{ll}
\displaystyle G(\phi(\tau))&+\displaystyle\int_0^\tau \gamma_n \phi(t)^\frac1q dt= \frac{1}{v_m}\Bigg(\int_0^{v_m} G(\phi(t))dt\\\\
&\displaystyle+\int_0^{v_m}\int_0^t \gamma_n\phi(r)^\frac{1}{q} dr\, dt -\dfrac{\gamma_2^*|\Omega|^{\eta}}{2c^*}\Bigg).
\end{array}
\end{equation}
Recalling inequality \eqref{ineq_iniziale} and using the monotonicity property of function $G$, a comparison between the right-hand sides in \eqref{ineq_1Gu} and in \eqref{ineq_1Gv} can be established, yielding
$$G(\mu(\tau))+\int_0^\tau \gamma_n \mu(t)^\frac1q\le G(\phi(\tau))+\int_0^\tau \gamma_n \phi(t)^\frac1q.$$
Passing to the limit as $t\to \infty$ we get
$$\int_0^\infty \mu(t)^\frac1q dt\le\int_0^\infty \phi(t)^\frac1q dt$$
and hence 
$$\|u\|_{L^{q,1}(\Omega)}\le \|v\|_{L^{q,1}(\Omega^\sharp)}.$$
In order to conclude the proof, we have to show that 
$$ \int_0^\infty t\mu(t)^\frac1q dt\le \int_0^\infty t\phi(t)^\frac1q dt.$$
We consider the limit as $\tau\to\infty$ in \eqref{ineq_basic1}, and then we integrate by parts the first term on the right-hand side to obtain
$$\int_0^\infty \gamma_n t\mu(t)^\frac1q \le \int_0^\infty G(\mu(t))dt-\dfrac{\gamma_2|\Omega|^{\eta}}{2c}.$$
At the same time $$\int_0^\infty \gamma_n t\phi(t)^\frac1q =\int_0^\infty G(\phi(t))dt-\dfrac{\gamma_2^*|\Omega|^{\eta}}{2c^*}.$$
Therefore, it is enough to prove that 
\begin{equation}\label{ineq_desired1}
\int_0^\infty G(\mu(t))dt\le \int_0^\infty G(\phi(t))dt.
\end{equation}
To show inequality \eqref{ineq_desired1}, we multiply the inequality \eqref{ineq_fundamental} by $t G(\mu(t)) \mu(t)^{-\frac{n-2}{n}}.$
Since the function $G(\ell) \ell^{-\frac{n-2}{n}} = \ell^{\eta+ 2/n}$ is non decreasing in $\ell$ for $0<q\le\frac{n}{n-2}$, an integration from $0$ to any $\tau\ge v_m$ yields
\begin{equation}\label{ineq_G}
\begin{array}{ll}
\displaystyle\int_0^\tau \gamma_n tG(\mu(t))dt \le&\displaystyle \int_0^\tau-t\mu^{-\frac{n-2}{n}}G(\mu(t))\,d\mu(t)\\\\
&\displaystyle-G(|\Omega|)\dfrac{\gamma_2|\Omega|^{-\frac{2-n}{n}}}{2c}.
\end{array}
\end{equation}
We now integrate by parts both sides of \eqref{ineq_G}. After setting $C=-G(|\Omega|)\dfrac{\gamma_2|\Omega|^{-\frac{2-n}{n}}}{2c}$ and $J(\ell)=\dint_0^\ell w^{-\frac{n-2}{n}}G(w)\,dw$, we have
$$\tau\int_0^\tau\gamma_n G(\mu(t))dt+\tau J(\mu(\tau))\le \int_0^\tau\int_0^r\gamma_n G(\mu(z))dz\,dr+\int_0^\tau J(\mu(t))dt+C.$$
As above, we can apply Lemma \ref{lem_Gronwall} with $$\displaystyle \xi(\tau)=\int_0^\tau\int_0^r\gamma_n G(\mu(z))dz\,dr+\int_0^\tau J(\mu(t))dtdt,$$ and $\tau_0=v_m$, deducing that
$$\int_0^\tau\gamma_n G(\mu(t))dt+J(\mu(\tau))\le \frac{1}{v_m}\left(\int_0^{v_m}\int_0^r\gamma_n G(\mu(z))dz\,dr+\int_0^{v_m} J(\mu(t))dt+C\right).$$
The previous inequality holds as an equality whenever $\mu$ is replaced by $\phi$. Recalling inequality \eqref{ineq_iniziale}, it holds
$$\int_0^\tau\gamma_n G(\mu(t))dt+ J(\mu(\tau))\le\int_0^\tau\gamma_n G(\phi(t))dt+ J(\phi(\tau))$$ and for $\tau \to \infty$, we get inequality \eqref{ineq_desired1} which concludes the proof.
\end{proof}
\section{Further remarks}
The proofs of Theorems \ref{th_main_f}, \ref{th_main_1} can be generalized to the case of an open and bounded Lipschitz set $\Omega$ which has a finite number $k\geq 1$ of connected components $\Omega_j$. Since $\Omega$ is a Lipschitz set, the boundaries of its connected components must be disjoint. We generalize the boundary condition in problem \eqref{neumprob} with a piecewise constant one, by imposing $\mathcal{H}^{n-1}$-a.e on $\partial\Omega$, the equality between the normal derivative of a solution $u$ and $g(x)=\sum_{j=1}^k c_j\chi_{\partial\Omega_j}(x)$. We assume that the integral of the function $f$ on $\Omega_j$ is positive for every $j$. If the compatibility condition \eqref{compcond} is satisfied for every connected component $\Omega_j$, then the results of Theorems \ref{th_main_1}, \ref{th_main_f} can be recovered. We establish a comparison between a positive solution $u$ to the following problem
\begin{equation}\label{neumprob2}
\left\{
\begin{array}{ll}
-\Delta u= f  & \mbox{in $\Omega$},\\\\
\dfrac{\partial u}{\partial \nu}=g & \mbox{on $\partial\Omega$}.
\end{array}
\right.
\end{equation}
and the positive solution $v$ to the Schwartz symmetrized problem, having the same form of problem \eqref{symm}, which satisfies one of the following equalities
\begin{equation}\label{cond_1_g}
  \sum_{j=1}^k\frac{c^*}{c_j}\int_{\partial \Omega_j}u\,d\mathcal{H}^{n-1}=\int_{\partial \Omega^\sharp}v\,d\mathcal{H}^{n-1},
\end{equation}
\begin{equation}\label{cond_2_g}
   \sum_{j=1}^k\frac{c^*}{c_j}\int_{\partial \Omega_j}u^2\,d\mathcal{H}^{n-1}=\int_{\partial \Omega^\sharp}v^2\,d\mathcal{H}^{n-1}.
\end{equation}
\begin{corollary}
Let $u$ be a positive solution to problem \eqref{neumprob2}. If $v$ is the positive solution to problem \eqref{symm} satisfying the equality \eqref{cond_1_g}, then 
\begin{equation}\label{compL1_g}\|u\|_{L^{p,1}(\Omega)}\le \|v\|_{L^{p,1}(\Omega^\sharp)}\quad \mbox{for all } 0< p\le\frac{n}{2n-2};\end{equation}
If $v$ is the positive solution to problem \eqref{symm} satisfying the equality \eqref{cond_2_g}, then 
\begin{equation}\label{compL1-2_g}
\|u\|_{L^{p,1}(\Omega)}\le \|v\|_{L^{p,1}(\Omega^\sharp)}\quad \mbox{for all } 0< p\le\frac{n}{2n-2},
\end{equation}
\begin{equation}\label{compL2_g}\|u\|_{L^{2p,2}(\Omega)}\le \|v\|_{L^{2p,2}(\Omega^\sharp)} \quad \mbox{for all } 0< p\le\frac{n}{3n-4}.\end{equation}
\end{corollary}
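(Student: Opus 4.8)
The proof follows the structure of the proofs of Theorems \ref{th_main_f} and \ref{th_main_1}, and the plan is to verify that every ingredient used there survives the passage from a connected domain with a constant Neumann datum to the disconnected domain $\Omega=\bigcup_{j=1}^k\Omega_j$ with the piecewise constant datum $g=\sum_{j=1}^k c_j\chi_{\partial\Omega_j}$. First I would revisit Lemma \ref{lem_boundary_1}: inserting the same truncations $\varphi_h^i$ into the weak formulation of \eqref{neumprob2} and letting $h\to0$ produces, for a.e.\ $t>0$,
\[
\int_{\partial_i U_t}|\nabla u|\,d\mathcal{H}^{n-1}-\sum_{j=1}^k c_j\,\mathcal{H}^{n-1}(\partial_e U_t\cap\partial\Omega_j)=\int_{U_t}f\,dx,
\]
the only difference with the connected case being that the exterior term splits over the components. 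The compatibility condition on each $\Omega_j$ together with $\int_{\Omega_j}f\,dx>0$ forces $c_j<0$ for every $j$, so the conductivity $g$ of Lemma \ref{lem_boundary_1} (equal to $|\nabla u|$ on $\partial_i U_t$ and to $-c_j>0$ on $\partial_e U_t\cap\partial\Omega_j$) is nonnegative, the Cauchy--Schwarz step keeps its direction, and — since the isoperimetric inequality $\mathcal{H}^{n-1}(\partial U_t)^2\ge\gamma_n|U_t|^{\frac{2n-2}{n}}$ requires no connectedness of $U_t$ — one obtains
\[
\gamma_n\mu(t)^{\frac{2n-2}{n}}\le\Bigl(-\mu'(t)-\sum_{j=1}^k\frac1{c_j}\,\mathcal{H}^{n-1}(\partial_e U_t\cap\partial\Omega_j)\Bigr)\int_0^{\mu(t)}f^*(s)\,ds,
\]
while the identity \eqref{eq_fundamental} for $v$, whose datum is the single constant $c^\star$, is unchanged.

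Next I would upgrade Lemmas \ref{lem_boundary_2} and \ref{lem_boundary_3}: Fubini's theorem applied on each component gives
\[
\int_0^{+\infty}\sum_{j=1}^k\frac1{c_j}\,\mathcal{H}^{n-1}(\partial_e U_\tau\cap\partial\Omega_j)\,d\tau=\sum_{j=1}^k\frac1{c_j}\int_{\partial\Omega_j}u\,d\mathcal{H}^{n-1},
\]
\[
2\int_0^{+\infty}\tau\sum_{j=1}^k\frac1{c_j}\,\mathcal{H}^{n-1}(\partial_e U_\tau\cap\partial\Omega_j)\,d\tau=\sum_{j=1}^k\frac1{c_j}\int_{\partial\Omega_j}u^2\,d\mathcal{H}^{n-1},
\]
and the crucial observation is that conditions \eqref{cond_1_g} and \eqref{cond_2_g} are designed precisely so that these right-hand sides equal $\gamma_1^\star/c^\star$ and $\gamma_2^\star/c^\star$, i.e.\ exactly the constants produced on the $v$-side in Section 3. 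Hence, multiplying the fundamental inequality by $\mu(t)^{\frac1p-\frac{2n-2}{n}}$ (respectively by $t\,\mu(t)^{\frac1p-\frac{2n-2}{n}}$ or by $t\,F(\mu(t))\,\mu(t)^{-\frac{2n-2}{n}}$) and integrating, the boundary constant $C$ generated on the $u$-side coincides with the one on the $v$-side, just as $-\gamma_1/c=-\gamma_1^\star/c^\star$ and $-\gamma_2/(2c)=-\gamma_2^\star/(2c^\star)$ did under \eqref{cond_1} and \eqref{cond_2}.

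The last structural ingredient is inequality \eqref{ineq_iniziale}, $\mu(t)\le\phi(t)=|\Omega|$ for $0\le t\le v_m$, which rests on $u_m\le v_m$; here one short extra estimate is needed. Using \eqref{cond_1_g} (or \eqref{cond_2_g}) I would write $v_m\,\mathcal{H}^{n-1}(\partial\Omega^\sharp)=c^\star\sum_j c_j^{-1}\int_{\partial\Omega_j}u\,d\mathcal{H}^{n-1}$, estimate $\int_{\partial\Omega_j}u\ge u_m\,\mathcal{H}^{n-1}(\partial\Omega_j)$ (recalling $c_j^{-1}<0$ and $c^\star<0$), and reduce to checking that
\[
\frac{c^\star}{\mathcal{H}^{n-1}(\partial\Omega^\sharp)}\sum_{j=1}^k\frac{\mathcal{H}^{n-1}(\partial\Omega_j)}{c_j}\ \ge\ 1 .
\]
Substituting the compatibility relations $c_j=-\mathcal{H}^{n-1}(\partial\Omega_j)^{-1}\int_{\Omega_j}f$ and $c^\star=-\mathcal{H}^{n-1}(\partial\Omega^\sharp)^{-1}\int_\Omega|f|$, the left-hand side becomes $\mathcal{H}^{n-1}(\partial\Omega^\sharp)^{-2}\bigl(\int_\Omega|f|\bigr)\sum_j\mathcal{H}^{n-1}(\partial\Omega_j)^2\bigl(\int_{\Omega_j}f\bigr)^{-1}$; the elementary inequality $\sum_j a_j^2/b_j\ge(\sum_j a_j)^2/\sum_j b_j$ (valid because each $\int_{\Omega_j}f>0$), combined with $\int_\Omega f\le\int_\Omega|f|$, with $\sum_j\mathcal{H}^{n-1}(\partial\Omega_j)=\mathcal{H}^{n-1}(\partial\Omega)$ (the components have disjoint boundaries) and with the isoperimetric inequality $\mathcal{H}^{n-1}(\partial\Omega)\ge\mathcal{H}^{n-1}(\partial\Omega^\sharp)$, makes it at least $1$.

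Once these three adapted lemmas and \eqref{ineq_iniziale} are in hand, the remainder — the integrations by parts, the auxiliary functions $F$, the two applications of Gronwall's Lemma \ref{lem_Gronwall} with $\tau_0=v_m$, and the passages to the limit $\tau\to+\infty$ — is verbatim the proof of Theorem \ref{th_main_f} (and, when in addition $f\equiv1$, of Theorem \ref{th_main_1}), with $\gamma_1/c$ and $\gamma_2/(2c)$ replaced throughout by $\sum_j c_j^{-1}\int_{\partial\Omega_j}u$ and $\tfrac12\sum_j c_j^{-1}\int_{\partial\Omega_j}u^2$; this yields \eqref{compL1_g}, \eqref{compL1-2_g} and \eqref{compL2_g}. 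I expect the only genuine obstacle to be the bookkeeping in the first two paragraphs: one must make sure every $c_j$ is negative, so that the comparison datum on $\partial U_t$ stays nonnegative and the Cauchy--Schwarz step points the right way, and that the boundary constants on the two sides match — which is exactly the information encoded in \eqref{cond_1_g} and \eqref{cond_2_g}.
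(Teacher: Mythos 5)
Your proposal is correct and follows exactly the route the paper intends: the paper states this corollary without a written proof, merely asserting that the arguments of Theorems \ref{th_main_f} and \ref{th_main_1} generalize, and your adaptation of Lemmas \ref{lem_boundary_1}--\ref{lem_boundary_3} (with the exterior boundary term split over the components $\partial\Omega_j$) together with the matching of the boundary constants via \eqref{cond_1_g}, \eqref{cond_2_g} is precisely that generalization. The one detail the paper leaves entirely implicit — the multi-component version of $u_m\le v_m$, which you settle with the inequality $\sum_j a_j^2/b_j\ge(\sum_j a_j)^2/\sum_j b_j$ plus the isoperimetric inequality — is handled correctly.
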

\begin{corollary}
    Let assume that $f\equiv 1$ in $\Omega$. Let $u$ be a positive solution to problem \eqref{neumprob2}. If $v$ is the positive solution to problem \eqref{symm} satisfying the equality \eqref{cond_1_g}, it holds
$$u^\sharp(x)\le v(x) \quad \forall\, x \in \>\mbox{in $\Omega^\sharp$},\quad \>\mbox{for $n=2$}.$$
When $n\ge 3$, we have
$$\|u\|_{L^{p,1}(\Omega)}\le \|v\|_{L^{p,1}(\Omega^\sharp)}\quad \mbox{for all } \>0< p\le\frac{n}{n-2}.$$
If $v$ is the positive solution to problem \eqref{symm} satisfying the equality \eqref{cond_2_g}, it holds
$$u^\sharp(x)\le v(x) \quad \forall\,x \in \>\mbox{in $\Omega^\sharp$}, \quad \>\mbox{for $n=2$}$$
When $n\ge 3$, we have
$$\|u\|_{L^{p,1}(\Omega)}\le \|v\|_{L^{p,1}(\Omega^\sharp)}\quad \mbox{and}\quad \|u\|_{L^{2p,2}(\Omega)}\le \|v\|_{L^{2p,2}(\Omega^\sharp)} \quad \mbox{for all } \>0< p\le\frac{n}{n-2}.$$
\end{corollary}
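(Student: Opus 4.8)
The strategy is to repeat the argument of Theorem~\ref{th_main_1} essentially verbatim, the only genuinely new point being the way the piecewise constant datum $g=\sum_{j=1}^k c_j\chi_{\partial\Omega_j}$ enters the boundary terms, together with the verification that conditions \eqref{cond_1_g} and \eqref{cond_2_g} are exactly what is needed to match the boundary contribution of $u$ with the single boundary term of the symmetrized problem \eqref{symm}.

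First I would rederive Lemma~\ref{lem_boundary_1} in this setting. The test function $\varphi_h^i$ inserted into the weak formulation of \eqref{neumprob2} is insensitive to the number of components, and since the boundaries $\partial\Omega_j$ are pairwise disjoint the coarea argument produces, for a.e.\ $t>0$,
\[
\int_{\partial_i U_t}|\nabla u|\,d\mathcal{H}^{n-1}-\sum_{j=1}^k c_j\,\mathcal{H}^{n-1}(\partial_e U_t\cap\partial\Omega_j)=\int_{U_t}f\,dx .
\]
Each $c_j<0$ because $\int_{\Omega_j}f\,dx>0$, so running the isoperimetric/Cauchy--Schwarz estimate exactly as in Lemma~\ref{lem_boundary_1} gives, for a.e.\ $t>0$,
\[
\gamma_n\mu(t)^{\frac{2n-2}{n}}\le\Bigl(-\mu'(t)-\sum_{j=1}^k\frac{1}{c_j}\mathcal{H}^{n-1}(\partial_e U_t\cap\partial\Omega_j)\Bigr)\int_0^{\mu(t)}f^*(s)\,ds ,
\]
with equality when $\mu,f^*$ are replaced by $\phi,f^\sharp$ on $\Omega^\sharp$, the sum then collapsing to $-\frac{1}{c^\star}\mathcal{H}^{n-1}(\partial_e V_t)$ since $\partial\Omega^\sharp$ has a single component. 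As $f\equiv1$ we have $\int_0^{\mu(t)}f^*(s)\,ds=\mu(t)$, so the inequality reads $\gamma_n\mu(t)^{\frac{n-2}{n}}\le-\mu'(t)-\sum_j\frac{1}{c_j}\mathcal{H}^{n-1}(\partial_e U_t\cap\partial\Omega_j)$, with each summand nonnegative.

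Next I would prove the analogues of Lemmas~\ref{lem_boundary_2} and \ref{lem_boundary_3}. Fubini gives, for each $j$, $\int_0^{+\infty}\mathcal{H}^{n-1}(\partial_e U_\tau\cap\partial\Omega_j)\,d\tau=\int_{\partial\Omega_j}u\,d\mathcal{H}^{n-1}$ and $2\int_0^{+\infty}\tau\,\mathcal{H}^{n-1}(\partial_e U_\tau\cap\partial\Omega_j)\,d\tau=\int_{\partial\Omega_j}u^2\,d\mathcal{H}^{n-1}$, while on the ball side $\partial_e V_t=\emptyset$ for $t\ge v_m$, so the corresponding integrals over $(0,t)$ already equal $\int_{\partial\Omega^\sharp}v$ and $\int_{\partial\Omega^\sharp}v^2$. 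Multiplying by $-\frac{1}{c_j}>0$, summing over $j$, and inserting \eqref{cond_1_g} (resp.\ \eqref{cond_2_g}) yields, for $t\ge v_m$,
\[
-\sum_{j=1}^k\frac{1}{c_j}\int_0^{t}\mathcal{H}^{n-1}(\partial_e U_\tau\cap\partial\Omega_j)\,d\tau\ \le\ -\frac{1}{c^\star}\int_0^{t}\mathcal{H}^{n-1}(\partial_e V_\tau)\,d\tau ,
\]
and the same with the extra weight $\tau$ inside the integrals; these are precisely the inequalities used downstream. I would also record the two auxiliary facts used throughout the proof of Theorem~\ref{th_main_1}: inequality \eqref{ineq_iniziale}, which here is immediate because $\phi(t)=|\Omega|$ for $t\le v_m$; and $u_m\le v_m$, which now follows from \eqref{cond_1_g} (resp.\ \eqref{cond_2_g}), the identity $c_j=-|\Omega_j|/\mathcal{H}^{n-1}(\partial\Omega_j)$, the isoperimetric inequality applied on each $\Omega_j$, and the subadditivity $|\Omega|^{\frac{n-2}{n}}=\bigl(\sum_j|\Omega_j|\bigr)^{\frac{n-2}{n}}\le\sum_j|\Omega_j|^{\frac{n-2}{n}}$ (which is trivial for $n=2$ since the exponent is $0$ and $k\ge1$).

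With these three ingredients in hand, the rest is word for word Section~3. For $n=2$ one integrates the inequality against $dt$ under \eqref{cond_1_g} (resp.\ against $t\,dt$ under \eqref{cond_2_g}), compares with the corresponding equality for $\phi$, and obtains $\mu(\tau)\le\phi(\tau)$ for $\tau\ge v_m$, hence for all $\tau\ge0$ by \eqref{ineq_iniziale}, which is exactly $u^\sharp\le v$. For $n\ge3$ one multiplies the inequality by $\mu(t)^{\frac1q-\frac{n-2}{n}}$, then by $t\mu(t)^{\frac1q-\frac{n-2}{n}}$, then by $tG(\mu(t))\mu(t)^{-\frac{n-2}{n}}$ in turn (with $G(\ell)=\ell^{\eta+1}/(\eta+1)$, $\eta=\frac1q-\frac{n-2}{n}$, non‑decreasing exponents guaranteed by $0<q\le n/(n-2)$), integrates, uses the monotonicity of $\mu$ and Lemma~\ref{lem_Gronwall} exactly as in the proof of Theorem~\ref{th_main_1}, and recovers the $L^{q,1}$ and $L^{2q,2}$ comparisons for $0<q\le n/(n-2)$. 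I expect the only step that is not a literal repetition to be the one settled above, namely that the linear combination of boundary integrals produced by the piecewise constant datum telescopes, under \eqref{cond_1_g}--\eqref{cond_2_g}, into the single boundary term of the symmetrized problem; the mild caveat $v_m=0$, in which case the Gronwall step must be started from $\tau_0\downarrow0$, is excluded exactly as in the single-component setting once $u$ is genuinely positive on $\partial\Omega$.
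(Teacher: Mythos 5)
Your proposal is correct and follows exactly the route the paper intends: the paper gives no separate proof of this corollary, asserting only that the arguments of Theorems \ref{th_main_f} and \ref{th_main_1} "can be generalized" to finitely many components, and your write-up supplies precisely the needed modifications (the component-wise level-set identity, the weighted sums $-\sum_j c_j^{-1}(\cdot)$ in the analogues of Lemmas \ref{lem_boundary_2}--\ref{lem_boundary_3}, the telescoping of the boundary terms under \eqref{cond_1_g}--\eqref{cond_2_g}, and the verification of $u_m\le v_m$ via the per-component isoperimetric inequality and subadditivity of $t\mapsto t^{(n-2)/n}$). No gaps.
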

\noindent In the end, we provide five examples. The first shows that under condition \eqref{cond_1_g}, we can't recover the comparison \eqref{compL2_g} in $L^2$ norm, even in dimension 2. 
\begin{example} Let $\Omega\subset\mathbb{R}^2$ be the union of two disjoint balls $B_1$ and $B_2$ of radius 1. Let $u, v\geq 0$ be the unique solution to the problems 
\begin{equation}\label{ex1}
\left\{
\begin{array}{ll}
-\Delta u=f  & \mbox{in $\Omega$},\\\\
\dfrac{\partial u}{\partial \nu}=-\frac{1}{2}\chi_{\partial B_1}-\frac{\epsilon}{2}\chi_{\partial B_2} & \mbox{on $\partial\Omega$},\\\\
u=\chi_{\partial B_1} & \mbox{on $\partial\Omega$},
\end{array}
\right.\qquad
\left\{
\begin{array}{ll}
-\Delta v= f^\sharp & \mbox{in $\Omega^\sharp$},\\\\
\dfrac{\partial v}{\partial \nu}=-\frac{\sqrt{2}}{4}(1+\epsilon) & \mbox{on $\partial\Omega^\sharp$},\\\\
v=\frac{1+\epsilon}{2} & \mbox{on $\partial\Omega^\sharp$},
\end{array}
\right.
\end{equation}
where $\epsilon>0, f= \chi_{B_1}+\epsilon\chi_{B_2}$. Then $$\|u\|_{L^2(\Omega)}^2-\|v\|_{L^2(\Omega^\sharp)}^2=\frac{\pi}{16}(4 +\ln2)-\frac{1}{32}\pi \epsilon(47 + \log(16))+o(\epsilon)$$
\end{example}
\begin{proof}
    The solutions $u$ and $v$ are positive for the maximum principle. A straightforward calculation shows that condition \eqref{cond_1_g} is satisfied but 
    \begin{equation*}
        \sum_{j=1}^2\frac{c^*}{c_j}\int_{\partial \Omega_j}u^2\,d\mathcal{H}^{n-1}>\int_{\partial \Omega^\sharp}v^2\,d\mathcal{H}^{n-1}.
    \end{equation*}
Moreover
    \begin{equation*}
    u_{| B_1}=\frac{5-r^2}{4},\qquad u_{| B_2}=\frac{\epsilon}{4}(1-r^2).
    \end{equation*}
    Since $\Omega^\sharp$ is a ball of radius $\sqrt{2}$, $f^\sharp(r)=1$ for $0\leq r< 1$ and $f^\sharp(r)=\epsilon$ for $1<r< \sqrt{2}$, we get
    \begin{equation*}
    v=\left\{
    \begin{array}{ll}
c_1-\frac{r^2}{4} & \mbox{for $0\leq r\leq 1$},\\\\
 c_2+c_3\ln{r}-\frac{\epsilon}{4}r^2& \mbox{for $1<r\leq \sqrt{2}$}.
\end{array}\right.
    \end{equation*}
    The solution $v$ is $C^1$ and by imposing the continuity of $v'$, $v$ and the second boundary condition in \eqref{ex1}, we obtain
        \begin{equation*}
    c_3=\frac{1}{2}(\epsilon-1),\quad c_2=\frac{1}{2}+\epsilon+\frac{1}{2}(1-\epsilon)\ln{\sqrt{2}},\quad c_1=\frac{3}{4}(1+\epsilon)+\frac{1}{2}(1-\epsilon)\ln{\sqrt{2}}.
    \end{equation*}
    By means of a first order expansion, we get
    \begin{align*}
       \|u\|_{L^2(\Omega)}^2=\frac{61}{48}\pi+o(\epsilon),\quad \|v\|_{L^2(\Omega^\sharp)}^2=\frac{\pi}{48}(49 - 3\log(2))+\frac{1}{32}\pi \epsilon(47 + \log(16))+o(\epsilon).
    \end{align*}
    In conclusion
        \begin{align*}
       \|u\|_{L^2(\Omega)}^2-\|v\|_{L^2(\Omega^\sharp)}^2=\frac{\pi}{16}(4 +\ln2)-\frac{1}{32}\pi \epsilon(47 + \log(16))+o(\epsilon).
    \end{align*}
\end{proof}
Next, we set the previous example in dimension 3. It shows that the $L^1$ comparison \eqref{compL1_g} becomes false in dimension greater than 2, if the function $f$ is not identically 1 in $\Omega$. 
\begin{example} Let $\Omega\subset\mathbb{R}^3$ be the union of two disjoint balls $B_1$ and $B_2$ of radius 1. Let $u, v\geq 0$ be the unique solutions to the problems 
\begin{equation}\label{ex2}
\left\{
\begin{array}{ll}
-\Delta u=f & \mbox{in $\Omega$},\\\\
\dfrac{\partial u}{\partial \nu}=-\frac{1}{3}\chi_{\partial B_1}-\frac{\epsilon}{3}\chi_{\partial B_2} & \mbox{on $\partial\Omega$},\\\\
u=\chi_{\partial B_1} & \mbox{on $\partial\Omega$},
\end{array}
\right.\qquad
\left\{
\begin{array}{ll}
-\Delta v= f^\sharp & \mbox{in $\Omega^\sharp$},\\\\
\dfrac{\partial v}{\partial \nu}=-\frac{1+\epsilon}{3\sqrt[3]{4}} & \mbox{on $\partial\Omega^\sharp$},\\\\
v=\frac{1+\epsilon}{2\sqrt[3]{2}} & \mbox{on $\partial\Omega^\sharp$},
\end{array}
\right.
\end{equation}
where $\epsilon>0, f= \chi_{B_1}+\epsilon\chi_{B_2}$. Then $$\|u\|_{L^1(\Omega)}-\|v\|_{L^1(\Omega^\sharp)}=\frac{14\pi}{9} - \frac{8\sqrt[3]{4}}{9}\pi - \frac{2\pi}{45}\epsilon - \frac{28\sqrt[3]{4}}{45}\pi\epsilon.$$
\end{example}
\begin{proof}
 The solutions $u$ and $v$ are positive for the maximum principle. A straightforward calculation shows that condition \eqref{cond_1_g} is satisfied. Moreover
    \begin{equation*}
    u_{| B_1}=\frac{7-r^2}{6},\qquad u_{| B_2}=\frac{\epsilon}{6}(1-r^2).
    \end{equation*}
    Since $\Omega^\sharp$ is a ball of radius $\sqrt[3]{2}$, $f^\sharp(r)=1$ for $0\leq r< 1$ and $f^\sharp(r)=\epsilon$ for $1<r< \sqrt[3]{2}$, we get
    \begin{equation*}
    v=\left\{
    \begin{array}{ll}
c_1-\frac{r^2}{6} & \mbox{for $0\leq r\leq 1$},\\\\
 c_2+\frac{c_3}{r}-\frac{\epsilon}{6}r^2& \mbox{for $1<r\leq\sqrt[3]{2}$}.
\end{array}\right.
    \end{equation*}
    The solution $v$ is $C^1$ and by imposing the continuity of $v'$, $v$ and the second boundary condition in \eqref{ex1}, we obtain
        \begin{equation*}
    c_3=\frac{1-\epsilon}{3},\quad c_2=\frac{(1+7\epsilon)}{6\sqrt[3]{2}},\quad c_1=\frac{1}{2} + \frac{1}{6\sqrt[3]{2}} - \frac{\epsilon}{2} +\frac{7\epsilon}{6\sqrt[3]{2}}.
    \end{equation*}
   Evaluating the $L^1$-norms, we get
    \begin{align*}
       \|u\|_{L^1(\Omega)}=\frac{64}{45}\pi+\frac{4\pi}{45}\epsilon,\quad \|v\|_{L^1(\Omega^\sharp)}=-\frac{4}{30}\pi+\frac{8\sqrt[3]{4}}{9}\pi+ 4\pi\left(\frac{1}{30}+ \frac{7\sqrt[3]{4}}{45}\right)\epsilon.
    \end{align*}
    In conclusion
        \begin{align*}
       \|u\|_{L^1(\Omega)}-\|v\|_{L^1(\Omega^\sharp)}=\frac{14\pi}{9} - \frac{8\sqrt[3]{4}}{9}\pi - \frac{2\pi}{45}\epsilon - \frac{28\sqrt[3]{4}}{45}\pi\epsilon.
    \end{align*}
\end{proof}
\noindent With the following example, we show that even under condition \eqref{cond_2_g} and dimension 2, an $L^p$ comparison for $p>2$ is not satisfied if $p$ is large enough.
\begin{example} Let $\Omega\subset\mathbb{R}^2$ be the union of two disjoint balls $B_1$ and $B_2$ of radius 1. Let $u, v\geq 0$ be the unique solutions to the problems 
\begin{equation}\label{ex3}
\left\{
\begin{array}{ll}
-\Delta u=f  & \mbox{in $\Omega$},\\\\
\dfrac{\partial u}{\partial \nu}=-\frac{1}{2}\chi_{\partial B_1}-\frac{\epsilon}{2}\chi_{\partial B_2} & \mbox{on $\partial\Omega$},\\\\
u=\chi_{\partial B_1} & \mbox{on $\partial\Omega$},
\end{array}
\right.\qquad
\left\{
\begin{array}{ll}
-\Delta v= f^\sharp & \mbox{in $\Omega^\sharp$},\\\\
\dfrac{\partial v}{\partial \nu}=-\frac{\sqrt{2}}{4}(1+\epsilon) & \mbox{on $\partial\Omega^\sharp$},\\\\
v=\sqrt{\frac{1+\epsilon}{2}} & \mbox{on $\partial\Omega^\sharp$},
\end{array}
\right.
\end{equation}
where $\epsilon>0, f= \chi_{B_1}+\epsilon\chi_{B_2}$. Then $$ \|u\|_{L^6(\Omega)}^6-\|v\|_{L^6(\Omega^\sharp)}^6\approx 2.494-11.134\epsilon +o(\epsilon)$$
\end{example}
\begin{proof}
 The solutions $u$ and $v$ are positive for the maximum principle. A straightforward calculation shows that condition \eqref{cond_2_g} is satisfied. We have
    \begin{equation*}
    u_{| B_1}=\frac{5-r^2}{4},\qquad u_{| B_2}=\frac{\epsilon}{4}(1-r^2),
    \end{equation*}
    $\Omega^\sharp$ is a ball of radius $\sqrt{2}$, $f^\sharp(r)=1$ for $0\leq r< 1$ and $f^\sharp(r)=\epsilon$ for $1<r< \sqrt{2}$ and
    \begin{equation*}
    v=\left\{
    \begin{array}{ll}
c_1-\frac{r^2}{4} & \mbox{for $0\leq r\leq 1$},\\\\
 c_2+c_3\ln{r}-\frac{\epsilon}{4}r^2& \mbox{for $1<r\leq \sqrt{2}$}.
\end{array}\right.
    \end{equation*}
    The solution $v$ is $C^1$ and by imposing the continuity of $v'$, $v$ and the second boundary condition in \eqref{ex3}, we obtain
        \begin{equation*}
    c_3=\frac{1}{2}(\epsilon-1),\quad c_2=\sqrt{\frac{1+\epsilon}{2}}+\frac{\epsilon}{2}+\frac{1}{2}(1-\epsilon)\ln{\sqrt{2}},\quad c_1=\sqrt{\frac{1+\epsilon}{2}}+\frac{1}{4}(1+\epsilon)+\frac{1}{2}(1-\epsilon)\ln{\sqrt{2}}.
    \end{equation*}
    By means of a first order expansion, we get
    \begin{align*}
       \|u\|_{L^6(\Omega)}^6\approx 6.765+o(\epsilon),\quad \|v\|_{L^6(\Omega^\sharp)}^6\approx 4.271+11.134\epsilon +o(\epsilon).
    \end{align*}
    In conclusion
        \begin{align*}
       \|u\|_{L^6(\Omega)}^6-\|v\|_{L^6(\Omega^\sharp)}^6\approx 2.494-11.134\epsilon +o(\epsilon).
    \end{align*}
\end{proof}

\noindent Finally, a slightly modified version of an example from \cite{MSP} is taken into consideration. This allows us to prove that, when $c<0$, the comparison result \eqref{compL1} no longer holds if $u$ and $v$ fulfill condition \eqref{cond_1} but are not both positive.
\begin{example}\label{ex5}
For $a, \epsilon\in\mathbb{R}^+$, let $u_{a,\epsilon}, v_{a,\epsilon}$ be the unique solutions to the problems   
\begin{equation}\nonumber
\left\{\begin{array}{ll}
-\Delta u=f & \mbox{in $\Omega_{a,\epsilon}$},\\\\
\dfrac{\partial u}{\partial \nu}=-\epsilon & \mbox{on $\partial\Omega_{a,\epsilon}$},\\\\
\dint_{\partial\Omega_{a,\epsilon}}u=0,\\
\end{array}\right.\qquad
\left\{
\begin{array}{ll}
-\Delta v= f^\sharp & \mbox{in $\Omega_{a,\epsilon}^\sharp$},\\\\
\dfrac{\partial v}{\partial \nu}=c^* & \mbox{on $\partial\Omega_{a,\epsilon}^\sharp$},\\\\
\dint_{\partial\Omega_{a,\epsilon}^\sharp}v=0,
\end{array}
\right.
\end{equation}
where $f(x,y)=-(\sgn(y)+\sgn(x))$ and $\Omega_{a,\epsilon}=\left(-2\epsilon-\frac{1}{2a},\frac{1}{2a}\right)\times \left(-2\epsilon-\frac{a}{2},\frac{a}{2}\right)$. Then, it exists $a_0>0$ such that $\|u_{a,\epsilon}\|_{L^1(\Omega_{a,\epsilon})}>\|v_{a,\epsilon}\|_{L^1(\Omega_{a,\epsilon}^\sharp)}$ for $a>a_0$ and $\epsilon<a_0^{-1}$.
\end{example}
\begin{proof}
The solution $u_{a, \epsilon}$ can be written as $u_{a, \epsilon}=z_{a, \epsilon}+k(a,\epsilon)$, where $z_{a, \epsilon}=\frac{1}{2}y^2\sgn(y)-\left(\epsilon+\frac{a}{2}\right)y+\frac{1}{2}x^2\sgn(x)-\left(\epsilon+\frac{1}{2a}\right)x$. Since 
\begin{equation*}
\int_{\partial\Omega_{a, \epsilon}}z_{a, \epsilon}\,d\mathcal{H}^1\geq 0,
\end{equation*}
then $k(a,\epsilon)\leq 0$. For $x, y\in\Omega$ such that $x, y\geq 0$, we have 
\begin{align*}
\frac{1}{2}y^2\sgn(y)-\left(\epsilon+\frac{a}{2}\right)y&\leq 0,\\ 
\frac{1}{2}x^2\sgn(x)-\left(\epsilon+\frac{1}{2a}\right)x&\leq 0.
\end{align*}
Therefore
\begin{equation*}
\|u_{a, \epsilon}\|_{L^1(\Omega)}\geq \frac{1}{2a}\int_0^{a/2}\left[\left(\epsilon+\frac{a}{2}\right)y-\frac{1}{2}y^2\sgn(y)\right]dy=\frac{a^2}{48}+\frac{a\epsilon}{16}>\frac{a^2}{48}.
\end{equation*}
We consider the solution $w$ to the problem
\begin{equation*}
\left\{\begin{array}{ll}
-\Delta w=2 & \mbox{in $U$},\\\\
w=0 & \mbox{on $\partial U$},\\
\end{array}\right.
\end{equation*}
where U is the ball centered at the origin with radius 4. It exists $a_1>0$ such that $|\Omega_{a, \epsilon}^\sharp|<4$ for $a>a_1$ and $\epsilon<a_1^{-1}$, then $\Omega_{a, \epsilon}^\sharp\subset U$ and applying the maximum principle, we get $w\geq v_{a, \epsilon}$ in $\Omega_{a, \epsilon}^\sharp$, which implies $\|w\|_{L^1(U)}\geq \|v_{a,\epsilon}\|_{L^1(\Omega_{a,\epsilon}^\sharp)}$.
\end{proof}
\noindent As we can see, the conditions \eqref{cond_1} and \eqref{cond_2} become meaningless when $$\int_{\Omega}f\,dx=0.$$ On the other hand, the comparisons proved before are not generally verified if we take into account the solutions whose traces' mean value equals 0. To this aim, we consider the example given in \cite{MSP}.
\begin{example}\label{c=0}
For $a\in\mathbb{R}^+$, let $u_a, v_a$ be the unique solutions to the problems   
\begin{equation}\nonumber
\left\{\begin{array}{ll}
-\Delta u=f & \mbox{in $\Omega_a$},\\\\
\dfrac{\partial u}{\partial \nu}=0 & \mbox{on $\partial\Omega_a$},\\\\
\dint_{\partial\Omega_a}u=0,\\
\end{array}\right.\qquad
\left\{
\begin{array}{ll}
-\Delta v= f^\sharp & \mbox{in $\Omega_a^\sharp$},\\\\
\dfrac{\partial v}{\partial \nu}=-\frac{1}{4\sqrt{\pi}} & \mbox{on $\partial\Omega_a^\sharp$},\\\\
\dint_{\partial\Omega_a^\sharp}v=0,
\end{array}
\right.
\end{equation}
where $f(x,y)=-\sgn(y)$ and $\Omega_a=\left(-\frac{1}{2a},\frac{1}{2a}\right)\times \left(-\frac{a}{2},\frac{a}{2}\right)$. Then, it exists $a_0>0$ such that $\|u_a\|_{L^1(\Omega)}>\|v_a\|_{L^1(\Omega^\sharp)}$ for $a>a_0$.
\end{example}
\begin{proof}
The solution $u_a$ is the function $u_a=\frac{1}{2}y^2\sgn(y)-\frac{1}{2}ay$. Evaluating its $L^1$ norm, we get
    $$\|u_a\|_{L^1(\Omega)}=\frac{2}{a}\int_{-\frac{a}{2}}^0 \left(-\frac{1}{2} y^2 - \frac{1}{2} a y\right) dy=\frac{a^2}{12}.$$
    We observe that the symmetrized problem do not depend on the parameter $a$, because $|\Omega_a^|=1$ and $f(x,y)=\sgn(y)$ is equal to $\pm 1$ on sets of measure $\frac{1}{2}$ for every $a>0$. Therefore, $v_a=v$ and $\|v\|_{L^1(\Omega^\sharp)}$ is a positive constant.
\end{proof}
\section*{Acknowledgements}
All the authors are members of  Gruppo Nazionale per l’Analisi Matematica, la Probabilità e le loro Applicazioni (GNAMPA) of Istituto Nazionale di Alta Matematica (INdAM). The author Cristina Trombetti has been supported by the Project MiUR PRIN-PNRR 2022: "Linear and Nonlinear PDE’S: New directions and Applications", P2022YFAJH. The author Carlo Nitsch was partially supported by the INdAM-GNAMPA project 2023 "Symmetry and asymmetry in PDEs",  cod. CUP E53C22001930001. The author Antonio Celentano was partially supported by the INdAM-GNAMPA project 2023 "Modelli matematici di EDP per fluidi e strutture e proprieta' geometriche delle soluzioni di EDP", cod. CUP E53C22001930001, and by the INdAM-GNAMPA project 2024 "Problemi frazionari: proprietà quantitative ottimali, simmetria, regolarità", cod. CUP E53C23001670001. 

\bibliographystyle{plain}

\bibliography{Bibliografia}

\begin{thebibliography}{10}

\bibitem{ACNT}
A.~Alvino, F.~Chiacchio, C.~Nitsch, and C.~Trombetti.
\newblock Sharp estimates for solutions to elliptic problems with mixed boundary conditions.
\newblock {\em J. Math. Pures Appl.}, 152:251--261, 2021.

\bibitem{Alfetr}
A.~Alvino, V.~Ferone, G.~Trombetti, and P.-L. Lions.
\newblock Convex symmetrization and applications.
\newblock {\em Ann. Inst. H. Poincar\'{e} C Anal. Non Lin\'{e}aire}, 14(2):275--293, 1997.

\bibitem{ALT}
A.~Alvino, P.-L. Lions, and G.~Trombetti.
\newblock Comparison results for elliptic and parabolic equations via {S}chwarz symmetrization.
\newblock {\em Ann. Inst. H. Poincar\'{e} Anal. Non Lin\'{e}aire}, 7(2):37--65, 1990.

\bibitem{ANT}
A.~Alvino, C.~Nitsch, and C.~Trombetti.
\newblock A {T}alenti comparison result for solutions to elliptic problems with {R}obin boundary conditions.
\newblock {\em Communications on Pure and Applied Mathematics}, 76(3):585--603, 2023.

\bibitem{AGM}
V.~Amato, A.~Gentile, and A.~L. Masiello.
\newblock Comparison results for solutions to {$p$}-{L}aplace equations with {R}obin boundary conditions.
\newblock {\em Ann. Mat. Pura Appl. (4)}, 201(3):1189--1212, 2022.

\bibitem{AB2}
M.S. Ashbaugh and R.D. Benguria.
\newblock On {R}ayleigh’s conjecture for the clamped plate and its generalization to three dimensions.
\newblock {\em Duke Mathematical Journal}, 78(1):1--17, 1995.

\bibitem{Brock2}
F.~Brock.
\newblock Steiner symmetrization and periodic solutions of boundary value problems.
\newblock {\em Z. Anal. Anwend.}, 13(3):417--–423, 1994.

\bibitem{CGNT}
F.~Chiacchio, N.~Gavitone, C.~Nitsch, and C.~Trombetti.
\newblock Sharp estimates for the {G}aussian torsional rigidity with {R}obin boundary conditions.
\newblock {\em Potential Analysis}, 59:1107–1116, 2023.

\bibitem{Ci}
A.~Cianchi.
\newblock Elliptic equations on manifolds and isoperimetric inequalities.
\newblock {\em Proceedings of the Royal Society of Edinburgh: Section A Mathematics}, 114(3–4):213–227, 1990.

\bibitem{Fe}
V.~Ferone.
\newblock Symmetrization in a {N}eumann problem.
\newblock {\em Le Matematiche}, 41:67--78, 1986.

\bibitem{FM}
V.~Ferone and A.~Mercaldo.
\newblock Neumann problems and {S}teiner symmetrization.
\newblock {\em Communications in Partial Differential Equations}, 30(10):1537--1553, 2005.

\bibitem{gilbtrud}
D.~Gilbarg and N.S. Trudinger.
\newblock {\em Elliptic Partial Differential Equations of Second Order}.
\newblock Classics in Mathematics. Springer Berlin Heidelberg, 2001.

\bibitem{Gri}
P~Grisvard.
\newblock {\em Elliptic Problems in Nonsmooth Domains}.
\newblock Society for Industrial and Applied Mathematics, 2011.

\bibitem{L1}
J.~J. Langford.
\newblock Neumann comparison theorems in elliptic {PDE}s.
\newblock {\em Potential Analysis}, 43:415--–459, 2015.

\bibitem{L2}
J.~J. Langford.
\newblock Symmetrization of {P}oisson’s equation with {N}eumann boundary conditions.
\newblock {\em Annali della Scuola Normale Superiore di Pisa. Classe di scienze}, 14:1025--–1063, 2015.

\bibitem{Lang_rob}
J.~J. Langford.
\newblock {PDE} comparison principles for {R}obin problems.
\newblock {\em Canadian Journal of Mathematics}, 75(1):108–139, 2023.

\bibitem{leoni}
G.~Leoni.
\newblock {\em A First Course in Sobolev Spaces}.
\newblock Graduate studies in mathematics. American Mathematical Soc., 2009.

\bibitem{MSP}
C.~Maderna and S.~Salsa.
\newblock Symmetrization in {N}eumann problems.
\newblock {\em Applicable Analysis}, 9(4):247--256, 1979.

\bibitem{San}
R.~Sannipoli.
\newblock Comparison results for solutions to the anisotropic {L}aplacian with {R}obin boundary conditions.
\newblock {\em Nonlinear Anal.}, 214:Paper No. 112615, 21, 2022.

\bibitem{Tal}
G.~Talenti.
\newblock Elliptic equations and rearrangements.
\newblock {\em Ann. Scuola Norm. Sup. Pisa Cl. Sci. (4)}, 3(4):697--718, 1976.

\bibitem{Tal2}
G.~Talenti.
\newblock Nonlinear elliptic equations, rearrangements of functions and {O}rlicz spaces.
\newblock {\em Ann. Mat. Pura Appl. (4)}, 120:160--184, 1979.

\bibitem{T2}
G.~Talenti.
\newblock On the first eigenvalue of the clamped plate.
\newblock {\em Annali di Matematica Pura ed Applicata}, 129(1):265–280, 1981.

\bibitem{Tay}
M.E. Taylor.
\newblock {\em Partial Differential Equations I: Basic Theory}.
\newblock Applied Mathematical Sciences. Springer New York, 2010.

\end{thebibliography}

\end{document}